\newif\ifcolor\colorfalse
\newtheorem{theorem}{Theorem}[section]
\newtheorem{lemma}[theorem]{Lemma}
\newtheorem{corollary}[theorem]{Corollary}
\newtheorem{proposition}[theorem]{Proposition}
\newtheorem{definition}[theorem]{Definition}
\theoremstyle{definition} 
\newtheorem{example}[theorem]{Example}
\newtheorem{remark}[theorem]{Remark}
\newtheorem*{openproblem}{Qusetion}
\newcommand{\dual}{\mathrm{o}}
\newcommand{\VV}{\mathbf{V}}
\newcommand{\ideal}[1]{\langle #1 \rangle}
\newcommand{\QQ}{\mathbb{Q}}
\newcommand{\RR}{\mathbb{R}}
\newcommand{\PP}{\mathbb{P}}
\newcommand{\CC}{\mathbb{C}}
\newcommand{\sing}{\text{\textup{Sing}}}
\newcommand{\cx}{C(X)} 
\newcommand{\iy}{\mathcal{I}_{Y}} 
\newcommand{\icx}{\mathcal{I}_{C(X)}} 
\newcommand{\icy}{\mathcal{I}_{C(Y)}} 
\newcommand{\icxy}{\mathcal{I}_{C(X)\cap C(Y)}} 
\newcommand{\xzero}{\mathrm{Reg}(X)}
\newcommand{\yzero}{U}
\newcommand{\icxyz}{\mathcal{I}_{C(X)\cap C(\yzero)}} 
\newcommand{\ikxy}{\mathcal{I}_{\kappa^{-1}_X(Y)}} 
\newcommand{\ikxyz}{\mathcal{I}_{\kappa^{-1}_X(\yzero)}} 
 \newcommand{\setcap}{\cap} %
\newcommand{\Sing}{\mathrm{Sing}}
\newcommand{\Reg}{\mathrm{Reg}}
\newcommand{\codim}{\mathrm{codim}}
\newcommand{\Ex}{\mathrm{Ex}}
\newcommand{\Exr}{\mathrm{Exr}}
\newcommand{\co}{\mathrm{co}}
\renewcommand{\int}{\mathrm{int}}
\newcommand{\cls}{\mathrm{cls}}
\newcommand{\NN}{\mathbb{N}} 
\renewcommand{\AA}{\mathbb{A}}
\newcommand{\GG}{\mathbb{G}}
\newcommand{\<}{\langle}
\renewcommand{\>}{\rangle}
\title[Whitney Stratification of  Algebraic Boundaries]{Whitney Stratification of  Algebraic Boundaries  of Convex Semi-algebraic Sets}
\author{Zihao Dai, Zijia Li, Zhi-Hong Yang, Lihong Zhi}
\keywords{Dual variety, Conormal space, Normal cone, Convex semi-algebraic set, Extreme point, Whitney Stratification}
\subjclass[2010]{52A99, 14N05, 14P10, 51N35, 14Q15}
\begin{document}

\begin{abstract}
Algebraic boundaries of convex semi-algebraic sets are closely related to polynomial optimization problems.  Building upon  Rainer Sinn's work, we refine the stratification of iterated singular loci to a Whitney (a)
stratification, which gives a list of candidates of varieties whose dual is an irreducible component of the algebraic boundary of the dual convex body. We also present an algorithm based on Teissier's criterion to compute Whitney~(a) stratifications, {\ifcolor\color{magenta}\fi which employs conormal spaces and prime decomposition. }

\end{abstract}

\maketitle

\setcounter{tocdepth}{1}

\section{Introduction}\label{sec:introduction}

Let $K$ be a convex semi-algebraic compact set with $0$ in its interior. Let $\partial K$ be the Euclidean boundary of $K$. The algebraic boundary of $K$, denoted $\partial_aK$, is the Zariski closure of $\partial K$. The convex hull of a real algebraic variety has important applications in optimization \cite{blekherman2012semidefinite,nie2010algebraic,gouveia2012convex,gouveia2010theta,wang2019global}.

The algebraic boundary of the convex hull of a compact real algebraic variety 
\[V=\{x\in \RR^n\ |\ f_j(x)=0,\ f_j\in \RR[x],\ j=1,\ldots,r\}\]
has been studied by Ranestad, Rostalski, and Sturmfels \cite{ranestad2011convex,ranestad2012convex,rostalski2012chapter}.  
 If $V$ is an irreducible and smooth compact variety, by \cite[Theorem 1.1]{ranestad2011convex}, the algebraic boundary of its convex hull can be computed by biduality.   Guo et al. extended this result to non-compact or non-smooth real algebraic variety  \cite{MR3388301}. 
   In \cite{Sinn2015},
  Sinn studied the algebraic boundary of a convex semi-algebraic set.
 In \cite[Corollary 3.4]{Sinn2015}, he proved that the dual of an irreducible component of $\Ex_a(K)$ (the Zariski closure of extreme points of $K$, see Definition~\ref{def:exa}) is an irreducible component of $\partial_aK^\dual$ ($K^\dual$ is the dual convex body of $K$, see Definition \ref{def:convex}). However, the converse may not hold. Namely, the dual of an irreducible component of $\partial_aK^\dual$ may not be an irreducible component in $\Ex_a(K)$.  
 In  \cite[Theorem 3.16]{Sinn2015}, Sinn showed that the iterated singular locus of $\partial_aK$ gives a list of candidates of irreducible subvarieties $\overline Z$ (the projective closure of $Z$, see Definition~\ref{projclos})
 of $\overline{\Ex_a(K)}$, whose dual variety $\overline{Z}^*$ is an irreducible component of $\overline{\partial_aK^\dual}$. 
%
There is a condition contained in   \cite[Theorem 3.16]{Sinn2015}  that requires every point on the boundary of the convex set to be regular on each irreducible component of 
the
algebraic boundary containing it, and Sinn gave a counterexample \cite[Example 3.20]{Sinn2015} when the condition is removed. 


\begin{example}
\label{ex:tear}{\upshape \cite[Example 3.20]{Sinn2015}}
Let
  \[f=(z^2+y^2-(x+1)(x-1)^2)(y-5(x-1))(y+5(x-1)) \in \QQ[x,y,z].\] 
   The real variety of $I=\langle f \rangle$ 
   is shown in Figure~\ref{fig:teardrop1}.
{\ifcolor\color{teal}\fi
The \emph{teardrop} defined by 
$$
K=\{(x,y,z) \in \RR^3 ~|~ z^2+y^2-(x+1)(x-1)^2\le 0,\ x\le 1\}
$$
is a convex semi-algebraic set. The convexity of $K$ can be checked by its Hessian matrix.
}
\begin{figure}[tbhp] 
  \centering 
  \begin{subfigure}[t]{0.51\textwidth}
  \centering
      \includegraphics[width=0.8\textwidth]{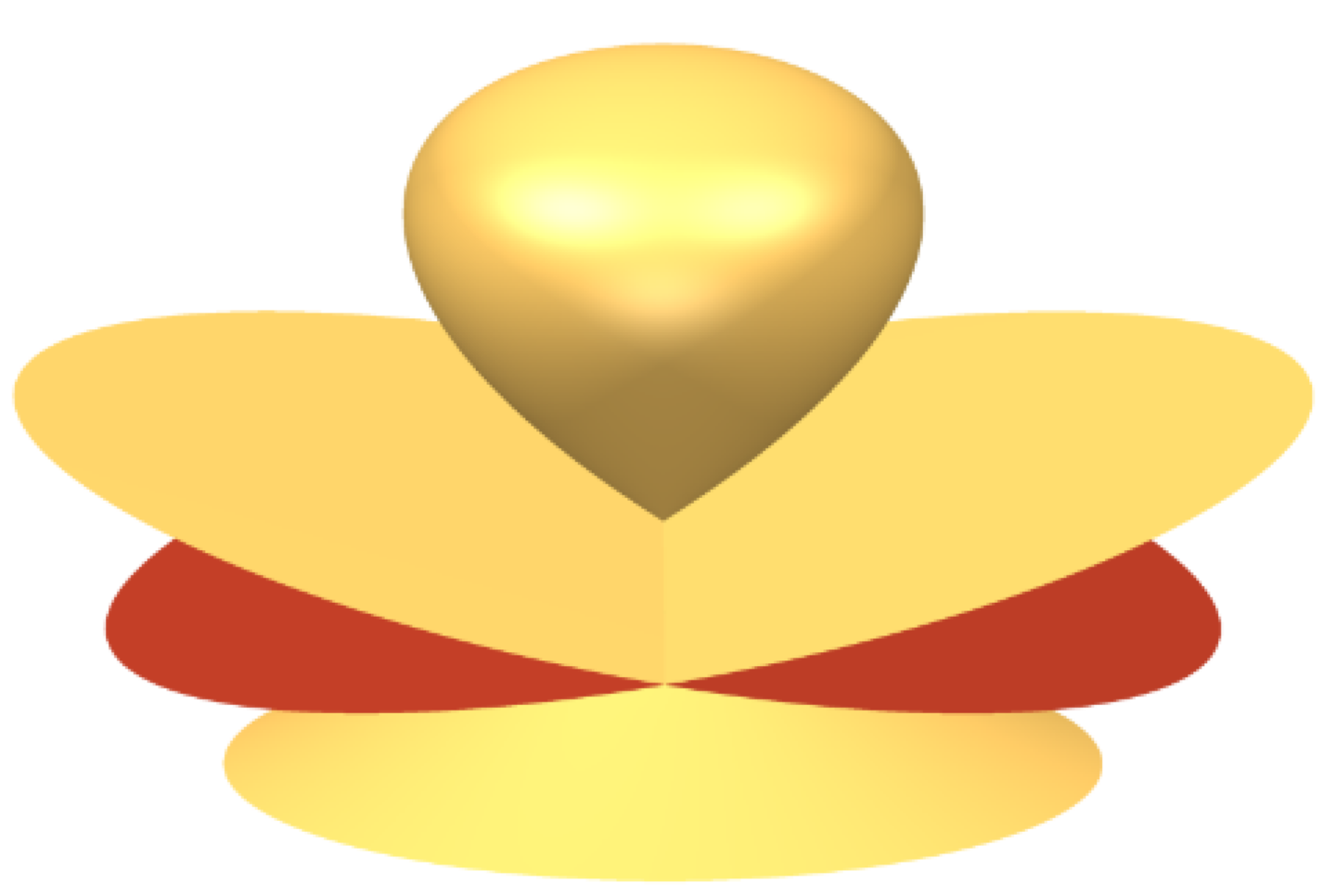} 
  \caption{
  {\ifcolor\color{teal}\fi
  The real variety of $f$ which contains the algebraic boundary of the teardrop $K$.
  }}
  \end{subfigure}$\quad$
  \begin{subfigure}[t]{0.45\textwidth}
  \centering
  \includegraphics[width=0.8\textwidth]{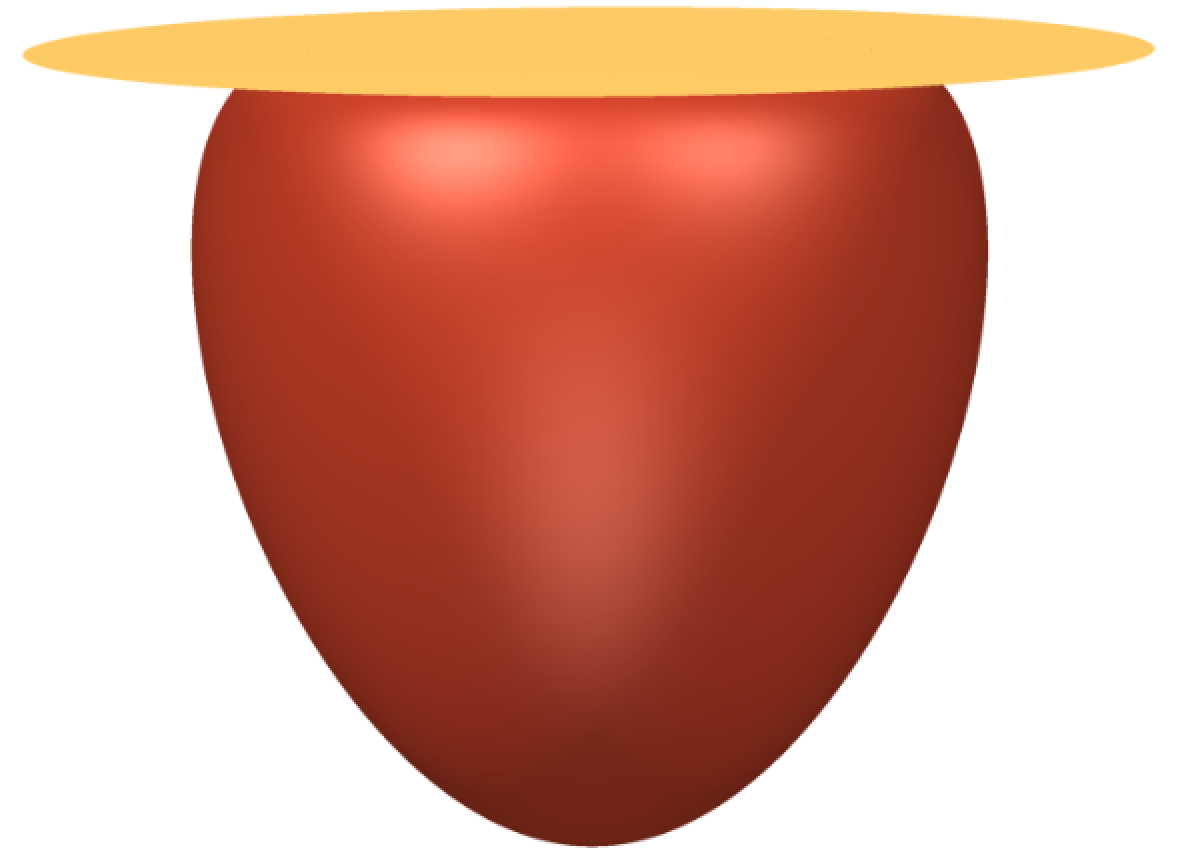} 
  \caption{{\ifcolor\color{teal}\fi
  The dual convex body of the teardrop $K$.
  }}
  \end{subfigure}
  \caption{Example~\ref{ex:tear}.}
  \label{fig:teardrop1}  
\end{figure} 



Let $g=f+\frac{1}{10}(x-1)yz^2$ 
{\ifcolor\color{magenta}\fi and }
$\VV(g)$ be the real variety defined by the polynomial $g$.
{\ifcolor\color{magenta} Then  $\VV(g)$ is the algebraic boundary of a perturbed teardrop $K'$, that is, $\VV(g) = \partial_a K'$. \fi}
The singular locus of $\partial_aK'$ is the union of $\VV(I_1),\VV(I_2),\VV(I_3)$ where
\begin{align*}
  I_1=&\langle {y,x-1}\rangle,
\\ 
 I_2=&\langle {z,-115+y,x-24}\rangle,
\\ 
 I_3=&\langle {z,115+y,x-24}\rangle.
\end{align*}
The 2nd singular locus of $\VV(g)$ is an empty set. 
{\ifcolor\color{teal}\fi
The extreme point $p=(1,0,0)$ of $K'$ lies on the line $\VV(I_1)$, but $p$ is not contained in any irreducible component of the iterated singular loci.
}
On the other hand, as pointed out by Sinn, the normal cone of the point $(1,0,0)$ relative to $K'$ is of dimension $3$.
The dual of $Z=\{(1,0,0)\}$ is the hyperplane $\VV(x+1)$, which is an irreducible component of $\partial_a(K')^\dual$ {\upshape\cite[Corollary 3.9]{Sinn2015}}. 


\end{example}

Sinn's \cite[Example 3.20]{Sinn2015} shows that the 
point $(1,0,0)$  can be discovered by 
checking Whitney's condition~(a) for the pair
\[(\Reg(\VV(g)),\VV(I_1)).\]
Hence, it would be interesting to ask the following question: 

\begin{openproblem}{\upshape \cite[Remark~3.17$(b)$]{Sinn2015}}
    Is Whitney's condition (a) sufficient for discovering all subvarieties $\overline{Z}\subseteq\overline{Ex_a(K)}$ whose dual is an irreducible component in $\overline{\partial_aK^\dual}$?
\end{openproblem}


 
 We give a positive answer to this question by proving the following theorem:

\begin{theorem}[Theorem \ref{thm:strata}]
	Let $K\subseteq\RR^n$ be a semi-algebraic convex body with $0\in\int(K)$. Let $Z\subseteq\Ex_a(K)$ be an irreducible subvariety with $Z\cap\Ex(K)$ dense in $Z$ such that $\overline{Z}^*$ is an irreducible component of $\overline{\partial_aK^\dual}$. Then $Z$ is an irreducible component of one of $F_i$, which is defined by induction:
	\begin{align*}
	    F_0:=&\partial_aK,\  F_1:=\Sing(F_0),\\
	    F_i:=&\Sing(F_{i-1})\cup\bigcup_{j=0}^{i-2}S\left(F_j\setminus F_{j+1},\Reg(F_{i-1})\right),
	\end{align*}
    where $S(X,Y)$ is the set of points in which the pair $(X,Y)$ does not satisfy Whitney's condition~(a) (see Definition~\ref{def:whitcondi}~(a)).
\end{theorem}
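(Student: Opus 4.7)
The plan is to argue by contradiction. Suppose $Z$ is not an irreducible component of any $F_i$. Since $F_0=\partial_aK$ has pure dimension $n-1$ and $Z\subseteq F_0$ is irreducible but not a component, necessarily $\dim Z<n-1$ and $Z$ is properly contained in some irreducible component of $F_0$. For each $i$ with $Z\subseteq F_i$, let $W_i$ denote an irreducible component of $F_i$ containing $Z$; by our standing assumption $\dim W_i>\dim Z$ for every such $i$. Combining Noetherianity of the descending chain $\{F_i\}$ with the fact that this iteration terminates (because the construction of the $F_i$ realises a Whitney stratification of the real algebraic variety $F_0$, whose existence is known), one can take the largest $m$ with $Z\subseteq F_m$, so that $Z\not\subseteq F_{m+1}$.

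Next I would choose a generic point $p\in Z\cap\Ex(K)\cap\Reg(Z)\cap(W_m\setminus F_{m+1})$; this intersection is non-empty because $Z\cap\Ex(K)$ is dense in $Z$ by hypothesis and $W_m\setminus F_{m+1}$ is open dense in $W_m$. Since $\Sing(F_m)\subseteq F_{m+1}$, the point $p$ lies in $\Reg(F_m)$ with tangent space $T_pW_m=T_pF_m$. The key quantitative input is that $\dim N_K(p)=n-\dim Z$ at such a generic $p$; this follows from the hypothesis that $\overline{Z}^*$ is an irreducible component of $\overline{\partial_aK^\dual}$, the density of $Z\cap\Ex(K)$ in $Z$, and an application of \cite[Corollary~3.9]{Sinn2015}. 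Because $\dim W_m>\dim Z$, this strictly exceeds $\dim(T_pW_m)^\perp=n-\dim W_m$, so there exists a unit vector $v\in N_K(p)$ with $T_pW_m\not\subseteq v^\perp$. By a standard Straszewicz-type density result in convex analysis, $v$ is a limit of outward unit normals $v_i$ at smooth boundary points $q_i\to p$ with $q_i\in\Reg(F_0)$, so the tangent hyperplanes $T_{q_i}F_0=v_i^\perp$ converge in the Grassmannian to $v^\perp\not\supseteq T_pW_m$. For $m\ge 1$ this is precisely the failure of Whitney's condition~(a) at $p$ for the pair $(F_0\setminus F_1,\Reg(F_m))$, placing $p$ in $S(F_0\setminus F_1,\Reg(F_m))\subseteq F_{m+1}$, contradicting $p\notin F_{m+1}$. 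The edge case $m=0$ is handled directly: $p\in\Reg(F_0)$ would force $\partial K$ to coincide locally with the smooth hypersurface $F_0$, giving $\dim N_K(p)=1$ and contradicting $n-\dim Z\ge 2$.

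The main obstacle will be the normal-cone dimension formula $\dim N_K(p)=n-\dim Z$ at a generic extreme point $p\in Z$: this is where the biduality between $Z$ and $\overline{Z}^*$ really enters, and it requires an enhancement of \cite[Corollary~3.9]{Sinn2015} from producing a single supporting hyperplane to controlling the full dimension of the normal cone. A secondary technical point is that the approximating sequence $q_i\to p$ can be taken inside $\Reg(F_0)$ and not merely among topologically smooth boundary points of $K$; this uses that $\Sing(F_0)\cap\partial K$ has real codimension at least two in $\partial K$, so generic normal-approximating sequences avoid it and the Whitney failure argument may be applied cleanly.
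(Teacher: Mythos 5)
Your overall architecture matches the paper's: argue by contradiction with the largest $m$ such that $Z\subseteq F_m$, pick a generic extreme point $p$ of $Z$ lying in the regular part of the component $W_m\supsetneq Z$, use \cite[Corollary~3.9]{Sinn2015} to get $\dim N_K(p)=\codim Z>\codim W_m$, extract a normal direction $v$ with $\langle v,T_pW_m\rangle\neq 0$, and exhibit it as a limit of conormals along a sequence of regular points of $\partial_aK$ converging to $p$, which is exactly the failure of Whitney~(a). (Incidentally, your ``main obstacle'' is not one: the dimension formula $\dim N_K(p)=n-\dim Z$ is quoted directly from \cite[Corollary~3.9]{Sinn2015} in the paper's Lemma~\ref{lem:span}; no enhancement is needed.)

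The genuine gap is the step you dispose of with ``a standard Straszewicz-type density result'': the claim that your chosen $v\in N_K(p)$ is a limit of outward normals at points $q_i\to p$ with $q_i\in\Reg(F_0)$. This is false for an arbitrary element of the normal cone. For a two-dimensional corner $K=\{y\ge |x|\}$ the normal cone at the origin is the full two-dimensional cone spanned by $(\pm1,-1)$, but the only limits of normals from smooth boundary points are the two extreme directions; the interior direction $(0,-1)$ is in $N_K(0)$ yet is never such a limit. So you must choose $v$ generating an \emph{extreme ray} of $N_K(p)$ (the span argument does permit this, since a pointed closed cone is spanned by its extreme rays), and then you still owe two nontrivial facts: (i) that such a $v$ corresponds to an extreme point $l$ of $K^\dual$ which is a \emph{central} point of $X^*$ for some irreducible component $X$ of $\partial_aK$, so that $l=\lim l_i$ with $l_i\in\Reg(X^*)$ — this is \cite[Corollary~3.14]{Sinn2015}, not general convex analysis; and (ii) that the points $x_i\in\Reg(X)$ supported by $l_i$ actually converge to $p$, which is the content of Lemma~\ref{cor:nbhd} and uses extremality of both $p$ and $l$. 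Your alternative of perturbing a normal-approximating sequence off the codimension-two set $\Sing(F_0)\cap\partial K$ does not work as stated: moving $q_i$ off that set gives no control on where the new normals converge. In short, the approximation step is precisely where the duality machinery of \cite{Sinn2015} (and the paper's Lemma~\ref{lem:span} together with Proposition~\ref{lem:tangent}) must enter, and your proposal replaces it with an unproven and, as formulated, incorrect assertion.
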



{\ifcolor\color{magenta}\fi
The canonical Whitney stratification was introduced independently by Teissier \cite{teissier1982multiplicites} and Henry and Merle \cite{henry1983limites}.
}
The computation of Whitney stratifications for affine varieties is a challenging problem in computational geometry. 
{\ifcolor\color{magenta}\fi
Quantifier Elimination can be used to compute Whitney stratifications of complex algebraic sets in $\CC^n$ and semi-algebraic sets in $\RR^n$ \cite{Rannou1991, rannou98}.
}
The algorithm introduced in \cite{DJ2021} represents Whitney's condition~(b) using new variables and eliminates them to compute the stratifications. Another algorithm in \cite{HN2023} focuses on computing Whitney (b) irregular points through primary decomposition techniques. 
Since Theorem~1.3 only concerns Whitney's condition~(a), we present an algorithm, based on Teissier's criterion in \cite[Remark~4.11]{FloresTeissier2018}, to compute Whitney~(a) stratifications for equi-dimensional projective varieties using conormal spaces and prime decomposition.

\paragraph*{\textbf{Structure of the paper}} In Section~\ref{sec:dual-varieties}, we introduce some basic concepts in convex analysis and duality theory. In Section~\ref{sec:whitneyintro}, we introduce Whitney stratification of an algebraic variety. In Section~\ref{sec:mainthm}, we prove Theorem~\ref{thm:strata}. In Section~\ref{sec:conormal}, 
we give algorithms based on Teissier's criterion to compute Whitney stratifications using conormal spaces and prime decompositions. In Section \ref{sec:experiment}, 
we illustrate our main theorem with two examples: Xano and Teardrop.


\section{Duality of Varieties and Convex Sets}\label{sec:dual-varieties}
In this section, we introduce some basic concepts in convex algebraic geometry, based on \cite{MR1451876,rostalski10,rostalski2012chapter,Sinn2015}.

\subsection{Conormal Spaces, Dual Varieties and Duality of Convex Sets}

\begin{definition}
Let $X\subseteq \PP^n$ be 
{\ifcolor\color{teal}\fi a complex analytic space of pure dimension. }
The \emph{conormal space} $C(X)$ of $X$ is the closure of 
\begin{equation*}\label{defe:conormal}
    \{(x,l)\in\PP^n\times(\PP^n)^*\ |\ x\in\Reg(X),\ \<l,T_xX\>=0\},
\end{equation*}
where $\<l, T_xX \>=0$ means that the linear operator $l$ maps all vectors of $T_x X$ to $0$.
Here, we use $\Reg(X)$ to represent the regular (smooth) points in $X$ and $\Sing(X):=X\setminus\Reg(X)$.

The projection of the conormal space $C(X)$ onto the second factor $(\PP^n)^*$ is called the \emph{dual variety} of $X$, denoted by $X^*$. More precisely, $X^*$ is the closure of
    \begin{equation*}
        \{l\in(\PP^n)^*\ |\ \exists x\in\Reg(X),\ \<l,T_xX\>=0\}.
    \end{equation*}
\end{definition}
{\ifcolor\color{magenta}\fi
We have the biduality theorem for projective varieties \cite[Theorem 1.1]{Gelfand08}:  if $X\subseteq \PP^n$ is an irreducible projective variety, then $ (X^*)^* = X $.
}

\begin{definition}\label{def:convex}
	A subset $D\subseteq\RR^n$ is called \emph{convex} if for all pairs $x,y\in D$ and $0\le\lambda\le 1$, $\lambda x+(1-\lambda)y\in D$.
	
	If $D\subseteq\RR^n$ is a compact set of full dimension, that is, of dimension $n$, 
 we call $D$ a \emph{convex body}. If $0\in\int(D)$ the Euclidean interior of $D$, we define the \emph{dual convex body}
	\begin{equation*}
	D^\dual:=\{l\in (\RR^n)^*\ |\ \forall x\in D,\ \<l,x\>\ge -1\}.
	\end{equation*}
	
	For a point $x\in\partial D$, a hyperplane $l\in \partial D^\dual$ satisfying $\<l,x\>=-1$ is called a \emph{supporting hyperplane} of $x$.
\end{definition}

For a semi-algebraic convex body $D$ and a point $x$ on the boundary, there is at least one hyperplane $l$ supporting $x$ by {\cite[Theorem~18.7]{MR1451876}}.

\begin{definition}
	A subset $C\subseteq\RR^{n+1}$ is called a \emph{cone} if for all $\lambda\ge 0$ and $x\in C$, $\lambda x\in C$.	A closed cone is called \emph{pointed} if it does not contain a line, namely $C\cap (-C)=\{0\}$. 
	
	Let $C\subseteq\RR^{n+1}$ be a full-dimensional closed pointed convex cone. We define the \emph{dual convex cone}
	\begin{equation*}
	    C^\vee:=\{l\in(\RR^{n+1})^*\ |\ \forall x\in C,\ \<l,x\>\ge 0\}.
	\end{equation*}
\end{definition}

The following remark shows that convex bodies and convex cones can be converted into each other.

\begin{remark}\label{rem:pullup}
	Let $D\subseteq\RR^n$ be a convex set. Define $\co(D):=\{(\lambda,\lambda x)\in \RR^{n+1}\ |\ \lambda\ge0,\ x\in D\}$, which is a pointed convex cone corresponded to $D$. One can verify that if $0\in\int(D)$, then we have
	\begin{equation*}
	\co(D^\dual)=\co(D)^\vee.
	\end{equation*}
	In fact, for $\lambda,\mu>0$, $\<(\lambda,\lambda l),(\mu,\mu x)\>\ge 0 \iff \lambda\mu(1+\<l,x\>) \ge 0\iff \<l,x\>\ge -1$.
\end{remark}

If $D\subseteq\RR^n$ is a convex body with $0\in\int(D)$, then we have
	\begin{equation*}
    	(D^\dual)^\dual=D.
	\end{equation*}
	If $C\subseteq\RR^{n+1}$ is a full-dimensional closed pointed convex cone, then we have
	\begin{equation*}
    	(C^\vee)^\vee=C.
	\end{equation*}

\subsection{Boundary of Convex Semi-Algebraic Sets}

In this subsection, we introduce the notations of algebraic boundaries, normal cones, and some of their properties.

\begin{definition}
	Let $S\subseteq\RR^n$ be a semi-algebraic set and $\partial S$ be the Euclidean boundary of $S$. The \emph{algebraic boundary} of $S$, denoted $\partial_a S$, is the Zariski closure of $\partial S$.
\end{definition}

\begin{proposition}{\upshape\cite[Corollary~2.1]{Sinn2015}}\label{prop:dim}
	Let $K\subseteq\RR^n$ be a nonempty semi-algebraic convex body. Then, each irreducible component of $\partial_aK$ has codimension one in $\AA^n$.
\end{proposition}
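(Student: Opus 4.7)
The plan is to derive the dimension of each irreducible component by combining two bounds, one coming from the fact that $\partial K$ is a topological $(n{-}1)$-manifold and one coming from an irreducibility/density argument.

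First, I would recall the classical fact from convex analysis that the Euclidean boundary of a full-dimensional convex body in $\RR^n$ is a topological manifold of dimension $n-1$ (locally, $\partial K$ is the graph of a convex function over any supporting hyperplane). Since $K$ is semi-algebraic, $\partial K$ is a semi-algebraic set, and its local manifold structure forces $\dim(\partial K) = n-1$ as a semi-algebraic set. Because the Zariski closure of a semi-algebraic set has the same dimension as the set itself, this gives $\dim(\partial_a K) = n-1$, and hence every irreducible component $W$ of $\partial_a K$ satisfies $\dim W \leq n-1$.

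Next, I would establish the matching lower bound. Decompose $\partial_a K = W_1 \cup \cdots \cup W_k$ into irreducible components and fix one, say $W_i$. The union $\bigcup_{j \ne i} W_j$ is a proper Zariski-closed subset of $\partial_a K$, so by definition of the Zariski closure, $\partial K$ cannot be contained in it; pick $p \in \partial K \setminus \bigcup_{j\ne i} W_j$. Each $W_j$ is Zariski-closed, hence Euclidean-closed, so there is a Euclidean open neighborhood $U$ of $p$ in $\RR^n$ with $U \cap W_j = \emptyset$ for all $j \ne i$. Then
\[
\partial K \cap U \;\subseteq\; \partial_a K \cap U \;\subseteq\; W_i.
\]
By the first paragraph, $\partial K \cap U$ is a topological $(n-1)$-manifold near $p$, so it is a semi-algebraic set of dimension $n-1$, and its Zariski closure (an $(n-1)$-dimensional variety contained in $W_i$) forces $\dim W_i \geq n-1$.

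Combining the two bounds yields $\dim W_i = n-1$ for every irreducible component, which is exactly the codimension-one statement. The main subtlety lies in the first step, namely justifying that $\partial K$ is genuinely an $(n-1)$-dimensional manifold (and not a lower-dimensional set), which is where the hypotheses ``convex'' and ``full-dimensional'' are essential; the rest is a clean density-and-components argument that does not rely on anything deeper than the basic interplay between Zariski and Euclidean topologies for semi-algebraic sets.
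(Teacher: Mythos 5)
Your argument is correct and self-contained; note that the paper itself does not prove this proposition but simply cites it from Sinn's work, so the only meaningful comparison is with the cited source. Your route exploits convexity at the outset: $\partial K$ is locally the graph of a (semi-algebraic) convex function over a hyperplane, which immediately gives that $\partial K$ is pure of dimension $n-1$; the component-isolation step (choosing $p\in\partial K$ off the other components and a Euclidean ball $U$, then Zariski-closing $\partial K\cap U$ inside $W_i$) is exactly the standard density argument and is carried out correctly. The cited result in Sinn's paper is deduced from a more general lemma valid for any semi-algebraic set equal to the closure of its interior, where the lower bound $\dim\ge n-1$ comes from a separation argument (a semi-algebraic set of codimension at least two cannot locally disconnect a ball containing both interior and exterior points) rather than from a graph description. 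Your version buys a more elementary and concrete local model at the price of generality, which is harmless here since the proposition only concerns convex bodies. Two small points worth making explicit: choose $U$ to be an open ball so that $\partial K\cap U$ is itself semi-algebraic, and justify the passage from ``topological $(n-1)$-manifold'' to ``semi-algebraic dimension $n-1$'' either by the equality of semi-algebraic and topological dimension or, more directly, by noting that the graph projection $\partial K\cap U\to\RR^{n-1}$ is an injective semi-algebraic map onto an open set, so dimensions agree.
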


\begin{definition}\label{projclos}
	Let $X\subseteq\AA^n$ be an affine variety. We define its \emph{projective closure} $\overline{X}\in \PP^n$ as the Zariski closure of the image of $X$ under the embedding 
	\begin{equation}
	\AA^n\hookrightarrow\PP^n,\ (x_1,\ldots,x_n)\longmapsto(1:x_1:\ldots:x_n).
	\end{equation}	
\end{definition}
{\ifcolor\color{magenta}\fi
For convenience, we will briefly use the term ``the dual $X^*$ to an affine variety $X$'' instead of ``the dual $\overline{X}^*$ to the projective closure $\overline{X}$ of an affine variety $X$''.
}

\begin{proposition}\label{lem:tangent}{\upshape\cite[Proposition~2.12]{Sinn2015}}
	 Let $K\subseteq\RR^n$ be a semi-algebraic convex body with $0\in\int(K)$, and let $M:=\partial K\cap\Reg(\partial_aK)$ be the smooth points on the boundary. Then, for any $x\in M$, there is exactly one hyperplane in $l\in K^\dual$ supporting $x$. Moreover, we have $\<l,T_xM\>=0$.
\end{proposition}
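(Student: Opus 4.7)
The plan is to combine the convex-analytic existence of a supporting hyperplane at $x$ with the local smoothness of $\partial_aK$ (hence of $M$) at $x$: once one supporting hyperplane has been produced, smoothness forces any other to have the same direction, giving both uniqueness and orthogonality to $T_xM$ simultaneously.

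First I would set up the local picture. Because $x\in\Reg(\partial_aK)$ and every irreducible component of $\partial_aK$ has codimension one in $\AA^n$ by Proposition~\ref{prop:dim}, a sufficiently small neighborhood $U$ of $x$ in $\AA^n$ meets $\partial_aK$ in a smooth real hypersurface. The identification I need is $\partial K\cap U=\partial_aK\cap U$, which allows me to regard $M=\partial K\cap\Reg(\partial_aK)$ as a smooth $(n-1)$-dimensional manifold near $x$ whose tangent space $T_xM$ coincides with the tangent hyperplane to $\partial_aK$ at $x$. I expect this identification to be the main technical step; I would argue it by noting that the smooth hypersurface $\partial_aK\cap U$ locally separates $\AA^n$ into two open components, while $\partial K$ is a closed semi-algebraic set of the same pure dimension $n-1$ (again using Proposition~\ref{prop:dim}), so $\partial K$ must fill out all of $\partial_aK\cap U$ near $x$ because $\mathrm{int}(K)$ has to coincide locally with one of those two components.

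Existence of a supporting hyperplane $l\in K^\dual$ with $\langle l,x\rangle=-1$ is supplied by \cite[Theorem~18.7]{MR1451876}, as already noted in the text. To pin down its direction I would introduce the affine function $h(y):=\langle l,y\rangle+1$, which by definition of $K^\dual$ satisfies $h\ge 0$ on $K$ with $h(x)=0$. Restricting to $M\subseteq K$, $h|_M$ attains a local minimum at $x$ on the smooth manifold $M$, so $dh(x)|_{T_xM}=0$; since $h$ is affine with linear part $l$, this gives $\langle l,T_xM\rangle=0$, which is the second assertion of the proposition. Uniqueness then reduces to linear algebra: $T_xM$ is $(n-1)$-dimensional, so the vanishing condition determines $l$ up to a scalar, and the normalization $\langle l,x\rangle=-1$ fixes that scalar provided $x\notin T_xM$ as a vector. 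This last condition is automatic because the affine tangent hyperplane $x+T_xM$ supports $K$ at $x$, hence cannot contain the interior point $0\in\mathrm{int}(K)$; equivalently $-x\notin T_xM$, so the scalar is uniquely pinned down.
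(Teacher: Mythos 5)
The paper does not prove this proposition; it is quoted verbatim from \cite[Proposition~2.12]{Sinn2015}, so there is no in-paper argument to compare against. Your proof is correct and follows the standard route one would expect (and essentially the route Sinn takes): once $M$ is known to be a smooth hypersurface near $x$, the first-order optimality condition for the affine function $h(y)=\langle l,y\rangle+1$, which is nonnegative on $K$ and vanishes at $x$, forces every supporting functional to annihilate the $(n-1)$-dimensional space $T_xM$, and the normalization $\langle l,x\rangle=-1$ then pins down the unique element of the one-dimensional annihilator.

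The only step I would ask you to tighten is the identification $\partial K\cap U=\partial_aK\cap U$. As phrased, ``$\mathrm{int}(K)$ has to coincide locally with one of the two components of $U\setminus\partial_aK$'' quietly presupposes that $\partial_aK$ does not meet $\int(K)$ near $x$, which is part of what is being established (globally, components of $\partial_aK$ can and do pass through $\int(K)$, e.g.\ for the convex hull of a lemniscate). Two clean fixes: (i) invariance of domain --- $\partial K\cap U$ is an $(n-1)$-manifold (an open subset of $\partial K\cong S^{n-1}$) injected into the $(n-1)$-manifold $\partial_aK\cap U$, hence open in it, and it is also closed and nonempty, so it is everything once $U$ is shrunk to make $\partial_aK\cap U$ connected; or (ii) complete your separation argument by noting that $U\setminus\partial_aK$ is covered by the two disjoint open sets $(U\setminus\partial_aK)\cap\int(K)$ and $(U\setminus\partial_aK)\setminus K$, both nonempty since the hypersurface is nowhere dense, so each of the two connected components lies in exactly one of them; then every point of $\partial_aK\cap U$ is a limit of points from both, hence lies in $\partial K$. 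With either repair the remainder of your argument, including the uniqueness via the annihilator of $T_xM$, goes through as written.
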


\begin{definition}\label{def:exa}
	Let $D\subseteq\RR^n$ be a convex set. A point $x\in D$ is called an \emph{extreme point} if for any $y,z\in D$ and $0<\lambda<1$ with $x=\lambda y+(1-\lambda)z$, it holds that $x=y=z$. The set of extreme points is denoted by $\Ex(D)$. We denote the Zariski closure of $\Ex(D)$ in $\AA^n$ by $\Ex_a(D)$.
	
	Let $C\subseteq\RR^{n+1}$ be a pointed convex cone. A ray in $C$ is denoted by $\RR_+x$, and $\RR_+x=\RR_+y$ if and only if $x=\lambda y$ for some $\lambda>0$. A ray $\RR_+x\subseteq C$ is called an \emph{extreme ray} if for any $y,z\in C$ and $0<\lambda<1$ with $x=\lambda y+(1-\lambda)z$, it holds that $\RR_+x=\RR_+y=\RR_+z$. 
 The set of extreme rays is denoted by $\Exr(C)$. We denote the Zariski closure of the union of all the extreme rays by $\Exr_a(C)$.
\end{definition}




\begin{lemma}\label{cor:nbhd}
	Let $K\subseteq\RR^n$ be a semi-algebraic convex body with $0\in\int(K)$. Let $l\in\Ex(K^\dual)$ be a supporting hyperplane to a point $x\in\Ex(K)$. 
 Let $\{l_i\}\subseteq\partial K^\dual$ be an infinite sequence converging to $l$ and $x_i\in \partial_a K$ be a point supported by $l_i$.
 Then the sequence $\{x_i\}$ converges to $x$.
\end{lemma}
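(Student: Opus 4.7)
My plan is to extract a convergent subsequence of $\{x_i\}$ using compactness, show via continuity that its limit lies in the face of $K$ exposed by $l$, and then argue from the joint extremality hypotheses that this limit must coincide with $x$; a standard subsequence principle then upgrades this to convergence of the full sequence.

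First, since $K$ is compact (as a convex body) and $\{x_i\} \subseteq K$, any subsequence of $\{x_i\}$ has a further convergent sub-subsequence $x_{i_k} \to y$ with $y \in K$. The supporting condition $\langle l_i, x_i \rangle = -1$ (which holds because $l_i$ supports $x_i$) passes to the limit along $i_k$: by continuity of the bilinear pairing together with $l_{i_k} \to l$ and $x_{i_k} \to y$, we obtain $\langle l, y \rangle = -1$. Hence $l$ supports $y$, i.e., $y$ lies in the exposed face $F_l := \{z \in K : \langle l, z \rangle = -1\}$ of $K$.

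The heart of the argument is to conclude $y = x$ from the two extremality hypotheses $l \in \Ex(K^\dual)$ and $x \in \Ex(K)$. The plan is to show $F_l = \{x\}$. Suppose for contradiction that $y \ne x$ also lies in $F_l$; by convexity of $F_l$, the entire segment $[x,y]$ lies in $F_l \subseteq \partial K$. Considering the dual face $G_{[x,y]} := \{l' \in K^\dual : \langle l', z \rangle = -1 \text{ for all } z \in [x,y]\}$, which contains $l$, one uses biduality $(K^\dual)^\dual = K$ (valid because $0 \in \int(K)$) to transport the extremality of $l$ in $K^\dual$ into a statement about $x$ and $y$ in $K$, producing a nontrivial convex decomposition that contradicts $x \in \Ex(K)$ (or, dually, decomposing $l$ itself to contradict $l \in \Ex(K^\dual)$). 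This is the step where I expect the main difficulty: a naive convexity argument using only $x \in \Ex(K)$ does not rule out a nondegenerate $F_l$, so the argument must genuinely weave $l \in \Ex(K^\dual)$ together with biduality.

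Once $y = x$ is established for every subsequential limit, convergence of the full sequence follows by the standard subsequence principle: every subsequence of $\{x_i\}$ has a further subsequence converging to $x$ within the compact set $K$, so $x_i \to x$. The compactness and continuity arguments are routine; the substantive obstacle is the convexity-plus-duality argument identifying $y$ with $x$, where both extremality hypotheses and biduality are essential.
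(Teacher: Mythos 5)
Your overall architecture --- compactness of $K$ to extract a convergent subsequence, continuity of the pairing to place the subsequential limit $y$ in the exposed face $F_l=\{z\in K:\ \langle l,z\rangle=-1\}$, and the subsequence principle to upgrade to convergence of the whole sequence --- is the same as the paper's (the paper phrases the last step as a proof by contradiction, which is a cosmetic difference). The gap is precisely in the step you flag as the main difficulty: you never carry out the argument that $F_l=\{x\}$, and in fact no such argument can exist, because that claim is false under the stated hypotheses. Take $K=[-1,1]^2$, so that $K^\dual$ is the diamond $\{(a,b):|a|+|b|\le 1\}$; let $l=(1,0)\in\Ex(K^\dual)$ and $x=(-1,-1)\in\Ex(K)$, so $l$ supports $x$ and both extremality hypotheses hold. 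Yet $F_l$ is the entire edge $\{-1\}\times[-1,1]$. So ``transporting the extremality of $l$ through biduality'' produces no nontrivial decomposition of $x$ or of $l$ to contradict: an extreme point of $K^\dual$ can perfectly well expose a positive-dimensional face of $K$, and an extreme point of $K$ can share its exposed face with many other points. What continuity and the hypotheses actually give you is only $y\in F_l$ and $x\in F_l$, which is where your argument (and the statement you hoped to prove) stops.

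For comparison, the paper's proof does not attempt $F_l=\{x\}$; after obtaining $\langle l,x\rangle=\langle l,x'\rangle=-1$ it asserts that $x$ and $x'$ lie in the same face and that $x\in\Ex(K)$ forces $x=x'$. The square example shows this inference is also not valid for an arbitrary convex body, so you have correctly located the genuine soft spot of this lemma rather than missed it; but your proposed repair is strictly stronger than what is needed and is refuted by the same example. Any honest completion has to import information beyond the stated hypotheses --- for instance, the way the lemma is actually invoked in Lemma~\ref{lem:span} and Theorem~\ref{thm:strata}, where the $l_i$ are regular points of the dual of an irreducible component $X$ and $x_i$ is the associated point of tangency, not an arbitrary point of $\partial_aK$ supported by $l_i$.
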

\begin{proof}
By Definition \ref{def:convex}, we have $\<l_i,x_i\>=-1$ and $\<l,x\>=-1$. If $\{x_i\}$ does not converge to $x$, by the assumption that $K$ is compact, we can assume there is a subsequence of $\{x_i\}$, denoted by $\{x_{k_i}\}$, converging to $x'\in K$. So $\<l,x'\>=\lim_{i\to\infty}\<l_{k_i},x_{k_i}\>=-1$. This shows that  $l$ is a supporting hyperplane to $x'$. 
{As $$\<l,x\>=\<l,x’\>=-1,$$ we conclude that $x$ and $x’$ are in the same face. By $x\in \Ex(K)$, we have $x=x’$. Hence, the sequence $\{x_i\}$ converges to $x$.}
\end{proof}

Let $D\subseteq\RR^n$ be a convex set, $x\in\partial D$, we define 
\begin{equation*}
    N_D(x)=\{l\in(\RR^n)^*\ |\ \forall y\in D,\ \<l,y-x\>\ge0\}
\end{equation*}
as the \emph{normal cone} of $x$.

\section{Stratification on Varieties}\label{sec:whitneyintro}
The idea of stratification in algebraic geometry comes from topology, where an important approach is to divide the topological space into smaller and simpler parts. In \cite{MR0095844}, Whitney proved that an algebraic variety can be decomposed into several smooth submanifolds. In \cite{MR0188486,MR0192520}, Whitney refined the stratification of iterated singular loci into a stratification satisfying Whitney's condition (a) or (b). This section briefly introduces stratifications on varieties based on \cite{MR1234122,MR1226270,rannou98,MR4261554}.  

\begin{definition}
	Let $F$ be an algebraic set of $\CC^n$ or $\RR^n$. A \emph{stratification} of $F$ is a family of subsets $\{F_\alpha\}$ such that
	\begin{enumerate}
		\item $\{F_\alpha\}$ are pairwise disjoint;
		\item For each pair $F_\alpha$ and $F_\beta$, either $F_\beta\subseteq\cls(F_\alpha)$ or $\cls(F_\alpha)\cap F_\beta=\varnothing$, here $\cls(F_\alpha)$ means the Euclidean closure of $F_\alpha$;
		\item Each $F_\alpha$ is smooth. 
	\end{enumerate} 
	We call each $F_\alpha$ a \emph{stratum}.
\end{definition}

\begin{definition}[Whitney's Conditions]{\upshape\cite[Definition 9.7.1]{MR1659509}}\label{def:whitcondi}
	Let $X,Y$ be two strata of a real algebraic set $F$.
\begin{enumerate}[(i)]
\item 
  The pair $(X,Y)$ satisfies \emph{Whitney's Condition (a)} at a point $y\in Y$ if for every sequence $\{x_i\}_{i\in\NN}$ with $x_i\in X$ and $\lim_{i\to\infty} x_i=y$, it holds that $T_yY\subseteq\lim_{i\to\infty} T_{x_i}X$ if the latter limit exists. 
\item 
  The pair $(X,Y)$ satisfies \emph{Whitney's Condition (b)} at a point $y\in Y$ if for every sequences $\{x_i\}_{i\in\NN}\subset X$ and $\{y_i\}_{i\in\NN}\subset Y$ with $\lim_{i\to\infty} x_i=\lim_{i\to\infty} y_i=y$, it holds that $\lim_{i\to\infty} \RR(x_i-y_i)\subseteq\lim_{i\to\infty} T_{x_i}X$ if these two limits both exist.
	\end{enumerate}
\end{definition}

In fact, Whitney's condition~(b) implies condition~(a), see e.g. \cite[Proposition 2.4]{mather2012notes}.
We use the terminology from \cite{DJ2021}: The pair $(X,Y)$ is said to be \emph{Whitney (a) regular} (resp. Whitney regular) if $(X,Y)$ satisfies Whitney's Condition (a) (resp. Whitney's Condition (b)) at every point of $Y$.
The limit in Definition \ref{def:whitcondi} is taken with respect to the Euclidean topology of Grassmannians $\GG_n^k(\RR)$, which is described in detail in \cite[Section 9.7]{MR1659509}.

{\ifcolor\color{magenta}\fi
Teissier \cite{teissier1982multiplicites} and Henry and Merle \cite{henry1983limites} independently introduced the canonical Whitney stratification. Rannou and Mostowski \cite{Rannou1991} gave an algorithm based on quantifier elimination to calculate it.
}

\begin{definition}\label{def:filtration}
	Let $F\subseteq\RR^n$ be an algebraic set. We define 
  the \emph{Whitney (a) stratification} of $F$
	\begin{equation*}
	F=F_0\supsetneq F_1\supsetneq F_2\supsetneq\cdots\supsetneq F_r\supsetneq F_{r+1}=\varnothing
	\end{equation*}
	where 
\begin{enumerate}
    \item $F_1:=\Sing(F_0)$. 
	\item For $i \geq 2$, let 
	\begin{equation*}
	F_i:=\Sing(F_{i-1})\cup\bigcup_{j=0}^{i-2}S\left(F_j\setminus F_{j+1},\Reg(F_{i-1})\right)
 \end{equation*}
 where $S(X,Y)$ is the set of points in which the pair $(X,Y)$ does not satisfy Whitney's condition~(a).
 \item  $F_r\neq\varnothing$.
\end{enumerate} 	
\end{definition}

Let $F_i^j$ be the $j$-th connected component of $F_i\setminus F_{i+1}$. One can verify that by this definition, $\{F_i^j\}$ 
forms a stratification for which Whitney's condition (a) is satisfied for every pair of strata.
Note that the stratification 
{\ifcolor\color{magenta}\fi in Definiton~\ref{def:filtration} }
is the minimum of the ones that satisfy Whitney's condition (a). 
In practice, for two strata $X = F_{j_1}\setminus F_{j_1+1}$ and $Y=F_{j_2}\setminus F_{j_2+1}$ with $Y\subsetneq X$, we compute the Zariski closure of the set of the points where $(X,Y)$ does not satisfy Whitney's condition~(a),  denoted as $\text{Zar}(S(X,Y))$. Then $(X, Y\setminus \text{Zar}(S(X,Y))$ is
{\ifcolor\color{magenta}\fi also }
Whitney~(a) regular. 

	By {\upshape \cite[Section~3]{Rannou1991}} and {\upshape \cite[Theorem~9.7.5]{MR1659509}}, the set $S(X,Y)$ 
 has dimension strictly less than $Y$. Therefore, $\dim(F_{i+1} < \dim(F_i)$ for all $i\ge 0$, so the sequence $(F_0, F_1,\ldots, F_{r})$ has length no more than $\dim(F_0)+1$.

\section{Stratification of Algebraic Boundaries of Convex Semi-Algebraic Sets}\label{sec:mainthm}

A point $x$ on a real projective $X$ is called \emph{central} if it is the limit of a sequence of regular real points of $X$ \cite[Definition~7.6.3]{MR1659509}. Let $K\subseteq\RR^n$ be a semi-algebraic convex body with $0\in\int(K)$. For an extreme point $x\in\Ex(K)$, the embedding image $(1:x)\in\PP^n$ is a central point of $\overline{Y}^*$ which is the dual variety of an irreducible component $\overline{Y}$ of $\overline{\partial_aK^\dual}$ \cite[Corollary~3.14]{Sinn2015}. 


\begin{lemma}\label{lem:span}
Let $K\subseteq\RR^n$ be a semi-algebraic convex body with $0\in\int(K)$. Let $Z\subseteq\Ex_a(K)$ be an irreducible subvariety such that $\overline{Z}^*$ is an irreducible component of $\overline{\partial_aK^\dual}$, and $Z\cap \Ex(K)$ is dense in $Z$. Let $x\in Z$ be a general point and $N_K(x)$ be its normal cone. Then we have $\mathrm{span}(N_K(x))=(T_x Z)^\bot$.
\end{lemma}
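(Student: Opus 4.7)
I would prove the two inclusions $\mathrm{span}(N_K(x))\subseteq(T_xZ)^\bot$ and $\mathrm{span}(N_K(x))\supseteq(T_xZ)^\bot$ separately; the first uses only the density of $Z\cap\Ex(K)$ in $Z$, while the reverse inclusion combines biduality with a dimension count.

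\emph{The inclusion $\subseteq$.} I would take any $l\in N_K(x)$ and any $v\in T_xZ$. Since the general point $x$ is smooth on $Z$ and $Z\cap\Ex(K)\subseteq K$ is dense in $Z$, a local analytic parametrisation of $Z$ at $x$ produces a sequence $\{x_i\}\subseteq Z\cap\Ex(K)\subseteq K$ with $x_i\to x$ and $(x_i-x)/\|x_i-x\|\to v/\|v\|$. The normal-cone inequality $\langle l,x_i-x\rangle\ge 0$ passes to the limit as $\langle l,v\rangle\ge 0$, and applying the same to $-v$ upgrades this to $\langle l,v\rangle=0$; hence $l\in(T_xZ)^\bot$.

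\emph{The inclusion $\supseteq$.} I would exploit the rational support map
\[
y\colon \overline{Z}^*\dashrightarrow\overline{Z},\qquad l\mapsto y(l),
\]
characterised by $(y(l),l)\in C(\overline{Z})$. Since $\overline{Z}^*$ is $(n-1)$-dimensional and $\dim C(\overline{Z})=n-1$, the second projection $C(\overline{Z})\to\overline{Z}^*$ is birational, and composing its rational inverse with the surjective first projection $C(\overline{Z})\to\overline{Z}$ makes $y$ dominant. Restricting to real boundary data produces the semi-algebraic map
\[
\psi\colon \overline{Z}^*\cap\partial K^\dual\to Z,\qquad l\mapsto y(l),
\]
well defined on the Zariski-dense open locus where $l\in\Reg(\overline{\partial_aK^\dual})$; at such $l$, Proposition~\ref{lem:tangent} applied to $K^\dual$ identifies the algebraic $y(l)$ with the unique support point of $l$ in $K$, which is in particular real. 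Because $\overline{Z}^*$ is an irreducible component of the Zariski closure $\overline{\partial_aK^\dual}$ of $\partial K^\dual$, the real subset $\overline{Z}^*\cap\partial K^\dual$ is Zariski dense in $\overline{Z}^*$, and combining this with the dominance of $y$ forces the image of $\psi$ to be Zariski dense in $Z$.

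\emph{Dimension count.} Since $\overline{Z}^*\cap\partial K^\dual$ has real dimension $n-1$ and $\dim Z=d$, the generic fibre $\psi^{-1}(x)$ is a real semi-algebraic subset of $N_K(x)\cap\{\langle l,x\rangle=-1\}$ of dimension $n-1-d$. Because $\langle l,x\rangle=-1\ne 0$ on the fibre, the $\RR_{\ge 0}$-cone it generates has dimension exactly $n-d$, lies inside $N_K(x)$, and therefore yields $\dim N_K(x)\ge n-d$; together with $N_K(x)\subseteq(T_xZ)^\bot$ and $\dim(T_xZ)^\bot=n-d$, this forces $\mathrm{span}(N_K(x))=(T_xZ)^\bot$. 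The main obstacle is the dominance step for $\psi$: it hinges on the biduality-based dominance of the algebraic support map $y$ together with the Zariski density of $\overline{Z}^*\cap\partial K^\dual$ in $\overline{Z}^*$, the latter following because each irreducible component of the Zariski closure of a set must contain a Zariski-dense part of that set.
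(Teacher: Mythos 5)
Your route is genuinely different from the paper's on both halves. For the inclusion $\mathrm{span}(N_K(x))\subseteq(T_xZ)^\bot$ the paper argues via extreme rays of $N_K(x)$, \cite[Corollary~3.14]{Sinn2015} (central points of duals of components of $\partial_aK$) and generic Whitney~(a) for $(\Reg(X),Z)$; your direct use of the defining inequality $\langle l,x_i-x\rangle\ge0$ along sequences in $Z\cap\Ex(K)$ is simpler and correct, \emph{provided} you justify that at a general $x$ every direction of $T_xZ$ is realized by such a sequence. Zariski density of $Z\cap\Ex(K)$ alone does not give this (and a local analytic parametrisation of $Z$ need not land in $\Ex(K)$); what does give it is that the $d$-dimensional part of the semi-algebraic set $Z\cap\Ex(K)$ has relative interior in $Z(\RR)$ whose complement in $Z\cap\Ex(K)$ has dimension $<d$, so a general $x$ lies in that relative interior. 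For the reverse inclusion the paper simply cites \cite[Corollary~3.9]{Sinn2015}, which asserts $\dim N_K(x)=\codim(Z)$ at a general point exactly when $\overline{Z}^*$ is a component of $\overline{\partial_aK^\dual}$; you are in effect re-proving that corollary via biduality of conormal varieties, which is a legitimate but more laborious path.

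The genuine gap is in the fibre-dimension count. From $\dim\bigl(\overline{Z}^*\cap\partial K^\dual\bigr)=n-1$ and $\dim\psi(S)\le d$, the semi-algebraic fibre theorem only yields \emph{some} piece $T_0$ of the image with $\dim T_0+\dim\psi^{-1}(x)=n-1$ for $x\in T_0$; it does not say $\dim T_0=d$. If the $(n-1)$-dimensional mass of $\overline{Z}^*\cap\partial K^\dual$ concentrated over a proper subvariety of $Z$, then general points of $Z$ would see only fibres of dimension $<n-1-d$ and your bound $\dim N_K(x)\ge n-d$ would fail precisely where the lemma needs it (the normal cone can be large on a thin exceptional set and small generically, so the bound does not propagate from $T_0$ to general $x$). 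The missing step is, however, available from your own setup: the graph $\{(y(l),l)\ |\ l\in\psi^{-1}(T_0)\}$ is an $(n-1)$-dimensional semi-algebraic subset of the irreducible $(n-1)$-dimensional conormal variety $C(\overline{Z})$, hence Zariski dense in it; since the first projection $C(\overline{Z})\to\overline{Z}$ is surjective and the graph projects into the Zariski closure of $T_0$, that closure must be all of $\overline{Z}$, so $\dim T_0=d$ and $T_0$ is Zariski dense in $Z$. With that inserted (and with ``general $x$'' read uniformly as ``in a Zariski-dense full-dimensional semi-algebraic subset'', so the loci produced by your two halves can be intersected), the argument closes.
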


\begin{proof}
Let $x\in Z\cap\Ex(K)$. Considering $x$ as a supporting hyperplane of $K^\dual$, it has a nonempty intersection with $\Ex(K^\dual)$. Therefore, there exists an $l\in\Ex(K^\dual)$ with $\RR_+l\in \Exr(N_K(x))$ and $l$ is a supporting hyperplane to $x$, . By \cite[Corrollary 3.14]{Sinn2015}, there exists an irreducible component $X$ of $\partial_aK$ such that $l$ is a central point of $X^*$. Namely, there exists a sequence $\{l_i\}\subseteq \Reg(X^*)$ converging to $l$. Let $x_i\in X$ be the point supported by $l_i$, then $l_i=(T_{x_i}X)^*$. By \cite[Theorem 9.7.5]{MR1659509}, Whitney's condition (a) is satisfied for the pair $(\Reg(X),Z)$ for a general point $x\in Z$, i.e. $T_xZ\subseteq\lim_{i\to\infty} T_{x_i}X$. Hence we have $l=\lim_{i\to\infty} l_i\in(T_xZ)^*$. For any ray $\RR_+l\in \Exr(N_K(x))$, 
\[
    \RR_+l\subseteq\RR\cdot(T_xZ)^*=(T_xZ)^\bot,
\]
Then, we have $N_K(x)\subseteq(T_xZ)^\bot$.

As $\overline{Z}^*$ is an irreducible component of $\overline{\partial_aK^\dual}$, according to \cite[Corollary 3.9]{Sinn2015}, we get the following equations:
\begin{equation*}
    \dim(N_K(x))=\codim(Z)=\codim(T_xZ)=\dim((T_xZ)^\bot).
\end{equation*}
Because $N_K(x)\subseteq(T_xZ)^\bot$ and $\dim(N_K(x))=\dim((T_xZ)^\bot)$, we conclude that
 $ \mathrm{span}(N_K(x))=(T_xZ)^\bot$.
\end{proof}


\begin{theorem}\label{thm:strata}
	Let $K\subseteq\RR^n$ be a semi-algebraic convex body with $0\in\int(K)$. Let $Z\subseteq\Ex_a(K)$ be an irreducible subvariety with $Z\cap\Ex(K)$ dense in $Z$ such that $\overline{Z}^*$ is an irreducible component of $\overline{\partial_aK^\dual}$. Then $Z$ is an irreducible component of one of $F_i$, which is defined by induction:
	\begin{align*}
	    F_0:=&\partial_aK,\  F_1=\Sing(F_0),\\
	    F_i:=&\Sing(F_{i-1})\cup\bigcup_{j=0}^{i-2}S\left(F_j\setminus F_{j+1},\Reg(F_{i-1})\right),
	\end{align*}
	 where $S(X,Y)$ is the set of points in which the pair $(X,Y)$ does not satisfy Whitney's condition~(a).
\end{theorem}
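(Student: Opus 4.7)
The plan is to carry out a descent argument on the Whitney filtration. Since $Z\subseteq\partial_aK=F_0$ and the chain of $F_i$'s eventually becomes empty, let $i^*$ be the largest integer with $Z\subseteq F_{i^*}$. Suppose for contradiction that $Z$ is not an irreducible component of $F_{i^*}$; then $Z\subsetneq W$ for some irreducible component $W$ of $F_{i^*}$, and in particular $\codim W<\codim Z$. A general point $z\in Z\cap\Ex(K)$ must then lie in $\Reg(W)\subseteq\Reg(F_{i^*})$, since otherwise $Z\subseteq\Sing(F_{i^*})\subseteq F_{i^*+1}$ would violate the maximality of $i^*$. By the same maximality, Whitney's condition~(a) must hold at $z$ for the pair $(F_0\setminus F_1,\Reg(F_{i^*}))=(\Reg(\partial_aK),\Reg(F_{i^*}))$, since otherwise $z\in S(F_0\setminus F_1,\Reg(F_{i^*}))\subseteq F_{i^*+1}$.

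The strategy is now to show $N_K(z)\subseteq(T_zW)^\bot$; this will collide with the equality $\mathrm{span}(N_K(z))=(T_zZ)^\bot$ of Lemma~\ref{lem:span}, since then $\dim\,\mathrm{span}(N_K(z))=\codim Z>\codim W=\dim(T_zW)^\bot$. Pick any extreme ray $\RR_+l\in\Exr(N_K(z))$; as in the proof of Lemma~\ref{lem:span}, its generator may be taken in $\Ex(K^\dual)$, so by \cite[Corollary~3.14]{Sinn2015} $l$ is a central point of $X^*$ for some irreducible component $X$ of $\partial_aK$. Choose $l_i\in\Reg(X^*)$ with $l_i\to l$ and corresponding support points $x_i\in\Reg(X)$ so that $l_i=(T_{x_i}X)^*$; at generic points of $X$ we may arrange $x_i\in\Reg(\partial_aK)$, so that $T_{x_i}X=T_{x_i}\partial_aK$ and $x_i\to z$ as in the proof of Lemma~\ref{lem:span}. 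After extracting a subsequence with $T_{x_i}\partial_aK\to H$ in the Grassmannian, the assumed Whitney~(a) condition at $z$ forces $T_zW\subseteq H$, and passing the annihilations $\langle l_i,T_{x_i}\partial_aK\rangle=0$ to the limit gives $l\in H^\bot\subseteq(T_zW)^\bot$. Since $N_K(z)$ is a pointed closed convex cone (pointedness following from $0\in\int(K)$), it is generated as a closed convex cone by its extreme rays, which yields $N_K(z)\subseteq(T_zW)^\bot$.

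The main obstacle is recognizing that only the $j=0$ summand $S(\Reg(\partial_aK),\Reg(F_{i^*}))$ in the inductive definition of $F_{i^*+1}$ is actually needed to drive the contradiction, even though the deeper summands with $j\geq 1$ are included to ensure that the whole filtration is Whitney~(a). A secondary technical point is the smoothness bookkeeping: Sinn's central-point construction a priori only provides $x_i\in\Reg(X)$, which coincides with $\Reg(\partial_aK)$ only at generic points of $X$, so one passes to a subsequence before invoking the Whitney~(a) hypothesis between $\Reg(\partial_aK)$ and $\Reg(F_{i^*})$; once this matching is in place, the dimension count via Lemma~\ref{lem:span} closes the argument immediately.
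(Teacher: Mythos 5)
Your proposal is correct and follows essentially the same route as the paper's proof: both hinge on Lemma~\ref{lem:span} together with Sinn's central-point result, Lemma~\ref{cor:nbhd} and Proposition~\ref{lem:tangent}, and on the observation that only the $j=0$ Whitney pair $(\Reg(\partial_aK),\Reg(F_{i^*}))$ is needed. The only difference is organizational: you assume condition~(a) holds at a general point and derive a dimension contradiction with $\mathrm{span}(N_K(z))=(T_zZ)^\bot$, whereas the paper uses that equality to produce an extreme ray $l$ with $\langle l,T_xY\rangle\neq 0$ and exhibits the failure of condition~(a) directly; these are contrapositive phrasings of the same argument.
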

The proof below follows the main idea in the proof of \cite[Theorem 3.16]{Sinn2015}. 
\begin{proof}
	Let $\{F_i\}$ be a stratification of $\partial_aK$ satisfying Whitney's condition (a). Assume that 
 \begin{equation}\label{assmp}
 Z\subset F_k,  ~~~  Z\not\subset F_{k+1}.
 \end{equation}
 Let $Y$ be the irreducible component of $F_k$ that contains $Z$. We assume that $Z\not=Y$, i.e.
	$
	Z\subsetneq Y\subseteq F_k.$
 
	Let $x\in Z\cap Ex(K)$ be an extreme point of $K$. For a general point $x\in\Reg(Y)$, we have $T_xZ\subsetneq T_xY$. By Lemma \ref{lem:span}, 
   we have 
   \[(T_xY)^\bot \subsetneq (T_xZ)^\bot=\mathrm{span}(N_K(x)). \]
   Hence, there exists $\RR_+l\in N_K(x)$ with $l\in\Ex(K^\dual)$ such that 
   \[\<l,T_xY\>\neq 0.\]
   Since $l$ is an extreme point of $K^\dual$, by \cite[Corollary~3.14]{Sinn2015}, there exists an irreducible component of $\partial_aK$, denoted by $X$, satisfying that $l$ is a central point of its dual $X^*$, i.e. there exists a sequence $l_i\to l$ with $l_i\in \Reg(X^*)$. By Lemma \ref{cor:nbhd}, there exists a sequence $x_i\to x$ with $x_i\in\Reg(X)$, and each $l_i$ is the supporting hyperplane to $x_i$. Then, by Proposition \ref{lem:tangent}, we have 
   $\langle l_i,T_{x_i}X\rangle=0$. Therefore, 
	\begin{equation*}
	    \langle l,\lim_{i\to\infty} T_{x_i}X\rangle=\lim_{i\to\infty} \langle l_i,T_{x_i}X\rangle=0.
	\end{equation*}
	Because the Grassmannian $\GG_n^k$ is compact, without loss of generality, we can assume that $\lim_{i\to\infty} T_{x_i}X$ exists.   Since $\langle l, T_xY\rangle\neq0$, we have 
 \[T_xY\not\subset\lim_{i\to\infty} T_{x_i}X.\] 
{This implies that for any $x\in Z\cap\Ex(K)$, we have}
	\begin{equation*}
	x\in S(\Reg(X),\Reg(Y)).
	\end{equation*}
Since $Z\cap\Ex(K)$ is dense in $Z$ and $S(\Reg(X),\Reg(Y))$ is closed,  we conclude that 
	\begin{equation*}
		Z\subset \Sing(Y)\cup\left(\bigcup_{X\in\mathrm{irr}(\partial_aK)} S\left(\Reg\left(X\right),\Reg(Y)\right)\right)\subset F_{k+1},    
	\end{equation*}
which is a contradiction to the assumption (\ref{assmp}).
\end{proof}

\section{Computing Whitney~(a) stratifications via conormal spaces}
\label{sec:conormal}
In this section, we fix $\PP^n$ as the $n$-dimensional projective space over the complex number field $\CC$.
{\ifcolor\color{teal}\fi
We show that the algorithm originally proposed to compute Whitney (b) stratification by Helmer and Nanda in \cite{HN2022}, which was subsequently found to contain an error in regards to the output satisfying condition (b) and corrected by Helmer and Nanda in \cite{HN2023}, does in fact correctly compute a Whitney (a) stratification. Along with a proof of correctness relative to condition (a) we present a slightly altered version of the algorithm of \cite{HN2022} which is better suited to our application. The algorithm is based on an algebraic description of Whitney's conditions given in \cite{FloresTeissier2018}.}
\begin{definition}
\label{def:conormal}
{\ifcolor\color{teal} Let $X\subseteq \PP^n$ be a complex analytic space of pure dimension.\fi} 
Let $C(X)$ be the conormal space of $X$.
The {\upshape conormal map} $\kappa_X$ is defined as
\begin{equation}
\begin{tabular}{c@{\hspace*{0.3em}}c@{\hspace*{0.3em}}@{}c@{\hspace*{0.3em}}c}
$\kappa_X$: & $\cx$     & $\to$     &$X$\\
            & $(x,l)$ & $\mapsto$ &$x$.
\end{tabular}
\end{equation}
\end{definition}

{\ifcolor\color{teal}\fi
The conormal space $C(X)$ is closed and reduced (see e.g. \cite[Proposition 4.1]{teissier1982multiplicites} or \cite[Proposition 2.9]{FloresTeissier2018}),
{\ifcolor\color{magenta}\fi
therefore, the sheaf of ideals $\icx$ defining $C(X)$ corresponds to the vanishing ideal of $C(X)$.
}
The computation of $\icx$ is also given in the proof \cite[Proposition 4.1]{teissier1982multiplicites}.

In this section, all ideals are homogeneous since we only consider projective varieties. Although ``homogeneous ideal'' is more precise in this context, we will simply use ``ideal'' for brevity.
}

\begin{lemma}
\label{lem:teissier}
{\upshape \cite[Remark~4.11]{FloresTeissier2018}}
Let $X$ be an irreducible variety in $\PP^n$ and let $Y$ be a smooth
irreducible subvariety of $X$. Let $\icxy=\icx+\icy$ be the sheaf of ideals defining $C(X)\cap C(Y) $, and $\ikxy=\icx+\iy$ be the sheaf of ideals defining $\kappa^{-1}_X(Y)$. 
Then $(\Reg(X),Y) $ satisfies Whitney's condition~(a) if and only if
\begin{equation}
\label{eq:whita}
\ikxy \subset \icxy \subset \sqrt{\ikxy},
\end{equation}
and $(\Reg(X),Y) $ satisfies
Whitney's condition~(b) if and only if
\begin{equation}
\label{eq:whitb}
\ikxy \subset \icxy \subset \overline{\ikxy},
\end{equation}
where $\overline{\ikxy} $ is the integral closure of $\ikxy$.
\end{lemma}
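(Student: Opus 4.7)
My plan is to isolate the always-true containment first, then dispatch~(a) and~(b) using, respectively, the Nullstellensatz together with the definition of $C(X)$ as a Zariski closure, and the analytic-arc (valuative) criterion for integral closure.

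First, I would observe that $\ikxy \subseteq \icxy$ holds without any Whitney hypothesis. Since $C(Y)$ projects onto $Y$, the pullback ideal $\iy$ is contained in $\icy$, and hence $\ikxy = \icx + \iy \subseteq \icx + \icy = \icxy$. Geometrically this is the inclusion $C(X) \cap C(Y) \subseteq \kappa^{-1}_X(Y)$, since the first projection of any point of $C(Y)$ lies in $Y$. The real content of the lemma is therefore the rightmost inclusion in each of (\ref{eq:whita}) and~(\ref{eq:whitb}).

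For~(a), I would use the homogeneous Nullstellensatz (combined with reducedness of $C(X)$ and $\kappa^{-1}_X(Y)$, inherited from \cite[Proposition~2.9]{FloresTeissier2018}) to rewrite $\icxy \subseteq \sqrt{\ikxy}$ as the reverse set-theoretic inclusion $\kappa^{-1}_X(Y) \subseteq C(X) \cap C(Y)$. Since $Y$ is smooth, membership $(y, l) \in C(Y)$ is the single condition $\<l, T_y Y\> = 0$, so the statement becomes: every $(y, l) \in C(X)$ with $y \in Y$ satisfies $\<l, T_y Y\> = 0$. Unpacking the definition of $C(X)$ as the closure of the conormal over $\Reg(X)$, the pair $(y, l)$ arises as a limit of $(x_i, l_i)$ with $x_i \in \Reg(X)$ and $\<l_i, T_{x_i} X\> = 0$; after passing to a subsequence along which $T_{x_i} X \to \tau$ in the Grassmannian, the set of possible limits $l$ of such $l_i$ is exactly $\tau^\perp$. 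The condition then reads $\tau^\perp \subseteq T_y Y^\perp$, equivalently $T_y Y \subseteq \tau$, which is Whitney~(a) at~$y$.

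For~(b), the plan is to combine the analytic-arc criterion for integral closure (Hironaka, Lejeune--Teissier) with the curve selection lemma. The criterion states that $f \in \overline{\ikxy}$ iff for every analytic arc $\gamma \colon (\CC, 0) \to C(X)$ meeting $\kappa^{-1}_X(Y)$ at $t = 0$, one has $\mathrm{ord}_t(\gamma^* f) \geq \min_g \mathrm{ord}_t(\gamma^* g)$, the minimum taken over generators $g$ of $\ikxy$. Thus $\icxy \subseteq \overline{\ikxy}$ says that along every analytic arc $(x(t), l(t))$ in $C(X)$ converging to a point of $\kappa^{-1}_X(Y)$, the conormal $l(t)$ kills the secant $x(t) - y(t)$ for any accompanying arc $y(t) \subset Y$ to the required order in~$t$. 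The curve selection lemma then allows one to realise any prescribed configuration $(\lim \RR(x_i - y_i), \lim T_{x_i} X)$ by such a pair of arcs, converting the containment into $\lim \RR(x_i - y_i) \subseteq \lim T_{x_i} X$, which is Whitney~(b). The main obstacle will be this~(b) direction: matching the order inequality in the arc criterion with the metric Whitney~(b) limits requires careful bookkeeping of which generator of $\ikxy$ achieves the minimum along each arc, and the curve selection lemma must be invoked in both directions. The~(a) case is by comparison a direct translation between the Zariski closure in the definition of $C(X)$ and sequential convergence of orthogonal complements in the Grassmannian.
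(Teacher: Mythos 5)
The paper does not actually prove this lemma: it is quoted from \cite[Remark~4.11]{FloresTeissier2018} without an internal proof, so the comparison has to be against the proposal's own merits and against the paper's proof of the closely related Corollary~\ref{cor:whita}. The preliminary containment $\ikxy\subseteq\icxy$ (valid because $Y$ is closed and $\icy$ is radical, so $\iy\subseteq\icy$) and your entire condition~(a) argument are correct. Reducing $\icxy\subseteq\sqrt{\ikxy}$ via the Nullstellensatz to the set-theoretic inclusion $\kappa_X^{-1}(Y)\subseteq C(X)\cap C(Y)$, and then translating both ways between points of $C(X)$ lying over $Y$ and limits of tangent planes --- using compactness of the Grassmannian to extract $T_{x_i}X\to\tau$ and continuity of $T\mapsto T^{\perp}$ to see that the fibre of $C(X)$ over the limit point sweeps out exactly $\tau^{\perp}$ --- is essentially verbatim the argument the authors give for Corollary~\ref{cor:whita}, which generalizes the~(a) half of this lemma to Zariski-open subsets of a possibly singular $Y$. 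That half of your proposal is complete.

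The~(b) half has a genuine gap. The step you defer as ``careful bookkeeping'' --- passing from the valuative inequality $\mathrm{ord}_t\,\gamma^{*}f\ge \mathrm{ord}_t\,\gamma^{*}(\ikxy)$ along arcs in $C(X)$ to the limit condition $\lim\RR(x_i-y_i)\subseteq\lim T_{x_i}X$ and back --- is not bookkeeping; it is the substance of Teissier's theorem in \cite{teissier1982multiplicites}. Concretely, on $C(X)$ the generators of $\ikxy$ are local equations of $Y$ while the generators of $\icxy$ modulo $\ikxy$ are the components of the conormal vector along $T_yY$ (after choosing coordinates in which the smooth $Y$ is linear), so the order inequality compares the vanishing order of $\langle l(t),v\rangle$ for $v\in T_yY$ with the order of the distance from $x(t)$ to $Y$; identifying this with the secant condition requires controlling which equation of $Y$ realizes the minimal order along each arc, and the standard proofs obtain~(b) only as condition~(a) \emph{plus} a secant (Whitney $b'$) condition. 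Likewise, the curve selection lemma produces the required arcs only after one constructs the appropriate constructible incidence set of triples (point, secant direction, limiting tangent plane). As written, the~(b) equivalence is therefore asserted rather than proved. Since the paper only ever uses the~(a) statement, the honest course is to prove~(a) as you do and cite \cite{FloresTeissier2018} or \cite{teissier1982multiplicites} for~(b).
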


In Lemma~\ref{lem:teissier}, $Y$ is closed as it is a subvariety of $X$.
We observe that the closedness of $Y$ is necessary: 
{\ifcolor\color{magenta}\fi
if $Y$ is not closed, then the first inclusion in \eqref{eq:whita} and \eqref{eq:whitb} may not hold as shown in Example~\ref{ex:whit_cusp}, and the second inclusion in \eqref{eq:whitb} may not hold either 
\cite[Example 3.2]{LLYZ24}.
}

\begin{example}
\label{ex:whit_cusp}
Let $ f = x^2  t^2-y^2 z^2+z^3 t \in \CC[x,y,z,t] $ 
be a homogeneous polynomial. Let 
\begin{equation*}
    X=\VV_{\CC}(f)\subset \PP^3 \text{ and } \ Y = \VV_{\CC}(x,z)\setminus \VV_{\CC}(y)\subset \sing(X).
\end{equation*}
Although $(\Reg(X), Y)$ is Whitney~(a) regular, $\ikxy\not\subset\icxy$. 
Here the ideal  $\ikxy$ is computed by \cite[Eq.~$(5)$]{HN2023}, which is a result of the remarks below \cite[Theorem 3.12]{Mumford15}.
\end{example}


Nonetheless, we show that the second inclusion of \eqref{eq:whita} 
still holds even if $Y$ is 
{\ifcolor\color{magenta}\fi Zariski open.}

\begin{corollary}
\label{cor:whita}
Let $X$ be an irreducible variety in $\PP^n$ and $Y$ be an irreducible (not necessarily smooth) subvariety of $X$.
Let $\xzero = X\setminus \sing(X)$, and $\yzero\subset Y\setminus\sing(Y)$ be a Zariski open subset of $Y$.  Let $\icxyz$ be the sheaf of ideals defining $C(X)\cap C(\yzero)$ and $\ikxyz$ be the sheaf of ideals defining $\kappa^{-1}_X(\yzero)$.
Then $(\xzero, \yzero)$ satisfies Whitney's condition~(a) if and only if 
\begin{equation} 
\label{eq:whita2} 
\sqrt{\icxyz} \subset \sqrt{\ikxyz}.
\end{equation} 
\end{corollary}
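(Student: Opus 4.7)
The plan is to translate the algebraic inclusion $\sqrt{\icxyz}\subset\sqrt{\ikxyz}$ into a set-theoretic statement and then match it with the geometric content of Whitney's condition (a). By the projective Nullstellensatz in the bihomogeneous coordinate ring of $\PP^n\times(\PP^n)^*$, the radical ideals $\sqrt{\icxyz}$ and $\sqrt{\ikxyz}$ cut out the closed sets $\cx\cap C(\yzero)$ and $\overline{\kappa^{-1}_X(\yzero)}$ respectively, so the inclusion in question is equivalent to
\[
\kappa^{-1}_X(\yzero)\;\subset\;\cx\cap C(\yzero),
\]
the closure being absorbed into the right-hand side since the latter is already Zariski closed. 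Hence it suffices to prove that this set-theoretic inclusion is equivalent to Whitney (a) regularity of the pair $(\xzero,\yzero)$.

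For the forward direction, I would take $(y,l)\in\kappa^{-1}_X(\yzero)$, unpack $(y,l)\in\cx$ as a limit $(y,l)=\lim_{i\to\infty}(x_i,l_i)$ with $x_i\in\xzero$ and $l_i$ annihilating $T_{x_i}X$, and pass to a subsequence with $T_{x_i}X\to T$ in the Grassmannian. Then $l$ annihilates $T$; Whitney (a) at $y$ forces $T_yY\subset T$; so $l$ annihilates $T_yY$; and smoothness of $\yzero$ at $y$ identifies the fiber of the conormal map of $\yzero$ over $y$ with the annihilators of $T_yY$, placing $(y,l)$ in $C(\yzero)$. For the converse, fix $y\in\yzero$ and a sequence $x_i\to y$ in $\xzero$ with $T_{x_i}X\to T$; for any $l$ annihilating $T$, convergence in the Grassmannian produces $l_i$ annihilating $T_{x_i}X$ with $l_i\to l$, so $(y,l)\in\cx$, and the assumed inclusion together with smoothness of $\yzero$ at $y$ yields that $l$ annihilates $T_yY$. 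Arbitrariness of $l$ then gives $T_yY\subset T$ by subspace duality, which is exactly Whitney's condition (a) at $y$.

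The main subtlety, and the reason this corollary is not an immediate consequence of Lemma \ref{lem:teissier}, is the openness of $\yzero$: it is precisely this openness that breaks the first inclusion in Teissier's criterion, as Example \ref{ex:whit_cusp} illustrates. I expect the main technical point to lie in confirming that $C(\yzero)$ is well defined for the open subvariety $\yzero\subset\Reg(Y)$ and that its fibers over smooth points are exactly the annihilators of the tangent spaces; once that is granted, both halves of the conormal argument reduce to standard Grassmannian/limit manipulations of the same flavor as in the proof of Lemma \ref{lem:span}.
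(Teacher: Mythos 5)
Your proposal is correct and follows essentially the same route as the paper: both reduce the radical-ideal inclusion via the Nullstellensatz to the set-theoretic containment $\kappa_X^{-1}(\yzero)\subset C(X)\cap C(\yzero)$ (using that the right-hand side is already closed), and both establish its equivalence with Whitney's condition (a) by the same two limit arguments in the conormal space, using compactness of the Grassmannian and the identification of the fiber of $C(\yzero)$ over a smooth point with the annihilator of the tangent space. No gaps.
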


\begin{proof}
By Hilbert Nullstellensatz, the condition~\eqref{eq:whita2} is equivalent to 
\begin{equation}
\label{eq:whita_bridge}
\text{Zariski closure of}\left(\kappa_X^{-1}(\yzero)\right)
\subset C(X) \setcap C(\yzero).
\end{equation}
Since $C(X) \setcap C(\yzero)$ is Zariski closed and
\begin{equation*}
\kappa_X^{-1}(\yzero) = C(X) \setcap \big( \yzero \times (\PP^n)^*\big),
\end{equation*}
the inclusion
in \eqref{eq:whita_bridge} is equivalent to
\begin{equation}
\label{eq:whita3}
C(X) \setcap \big( \yzero\times (\PP^n)^*\big) \subset C(X) \setcap C(\yzero).
\end{equation}
In the following, we prove that
$(\xzero,\yzero)$ is Whitney~(a) regular if and only if \eqref{eq:whita3}
holds.

First, we show that \eqref{eq:whita3} leads to $(\xzero,\yzero)$
being Whitney~(a) regular. Let $y\in \yzero$, and let $(x^{(k)})$ be a sequence in $\xzero$
such that $x^{(k)}\to y$ and $T_{x^{(k)}}\xzero \to T$ as $k\to \infty$.
Then \eqref{eq:whita3} implies that
\begin{equation*}
   \forall l\in (\PP^n)^*: \<l, T\>=0 \Longrightarrow \<l, T_y\yzero\> = 0, 
\end{equation*}
which concludes $T_y\yzero \subset T$.

Conversely, we show that \eqref{eq:whita3} holds if $(\xzero, \yzero)$ is Whitney~(a) regular.
Let $(y,l)$ be a point in $C(X)\setcap (\yzero\times (\PP^n)^*)$. 
Since $C(X)$ is the closure of the following set:
\begin{equation*}
    \{(x,l)\in\PP^n\times(\PP^n)^*\ |\ x\in \xzero,\ \<l,T_xX\>=0\},
\end{equation*}
there exists a sequence $(x^{(k)}, l^{(k)})\in \xzero\times (\PP^n)^*$ such that
\begin{equation*}
    (x^{(k)}, l^{(k)})\to (y,l) \text{ as } k\to \infty.
\end{equation*}
Then the compactness of $\PP^n$ implies that
$(T_{x^{(k)}}\xzero)$ has a convergent subsequence $(T_{x^{(k_i)}}\xzero)$, and let
$T\subset \PP^n$ be the limit of $T_{x^{(k_i)}}\xzero$.
Consequently, $\< l, T\> = 0$ due to the continuity of the inner product.
As $(\xzero, \yzero)$ is Whitney~(a) regular, we have
$T_y \yzero\subset T$, which leads to $\<l, T_y\yzero \>  = 0$, and hence 
$(y,l)\in C(\yzero)$.
\end{proof}

With the notation in Corollary~\ref{cor:whita}, if
\begin{equation}
\label{eq:whita_subsetneq}
\ikxyz\not\subset \sqrt{\icxyz}  \subsetneq \sqrt{\ikxyz},
\end{equation} 
then there is some subvariety $W$ of $Y$ such that $(\xzero, \yzero\cup W)$
is also Whitney~(a) regular. 
In other words, if we are trying to stratify $X$ and we get a relation
as \eqref{eq:whita_subsetneq}, then we  know that
we have removed too many points from $Y$ while getting $\yzero$. 

\begin{lemma}
    \label{lem:whita_correctness}
    Let $X\subset \PP^n$ be an irreducible variety and $Y\subsetneq X$ be an irreducible subvariety of $X$.
    Let $Q=\ikxy \colon \icxy^{\infty}$ and $\yzero=\Reg(Y)\setminus\kappa_X(\VV_\CC(Q))$. Then
    \begin{equation}
        \label{eq:nwa}
        \sqrt{\icxyz} \subset \sqrt{\ikxyz},
    \end{equation}
    where $\kappa_X\colon C(X)\to X$ is the conormal map from $C(X)$, the conormal space of $X$, to $X$.
    Moreover, $(\Reg(X), \yzero)$ satisfies Whitney's condition~(a).
\end{lemma}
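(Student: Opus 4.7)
The statement has two parts: the algebraic inclusion~\eqref{eq:nwa} and the Whitney~(a) assertion. The latter follows immediately from Corollary~\ref{cor:whita}, so essentially all of the work is to verify the former.

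The plan is first to interpret the saturation $Q = \ikxy\colon \icxy^{\infty}$ geometrically. The standard commutative-algebra fact is that $\VV(I\colon J^{\infty}) = \overline{\VV(I)\setminus \VV(J)}$, namely the union of those irreducible components of $\VV(I)$ that are not contained in $\VV(J)$. Applying this with $I = \ikxy$ and $J = \icxy$, and using the set-theoretic equality $\kappa_X^{-1}(Y) = C(X)\cap (Y\times (\PP^n)^*)$, I would identify $\VV_{\CC}(Q)$ as the Zariski closure of
\[
\kappa_X^{-1}(Y)\setminus \bigl(C(X)\cap C(Y)\bigr) \;=\; \{(y,l)\in C(X)\cap (Y\times (\PP^n)^*) \mid (y,l)\notin C(Y)\}.
\]
Because $C(X)$ is projective and $\kappa_X$ is the restriction of the first-factor projection, $\kappa_X$ is proper, hence $\kappa_X(\VV_{\CC}(Q))$ is Zariski closed in $X$. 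Consequently $\yzero$ is a Zariski open subset of $\Reg(Y)$, as required by Corollary~\ref{cor:whita}.

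Next I would verify~\eqref{eq:nwa}. By the Hilbert Nullstellensatz (exactly as used in the proof of Corollary~\ref{cor:whita}), this inclusion is equivalent to the set-theoretic inclusion $\kappa_X^{-1}(\yzero)\subset C(X)\cap C(\yzero)$. Since $Y$ is irreducible and $\yzero$ is a Zariski open subset of $Y$ (if empty, the statement is vacuous), $\yzero$ is dense in $\Reg(Y)$, so $C(\yzero) = C(Y)$ as closed subsets of $\PP^n\times (\PP^n)^*$. It therefore suffices to show that every $(y,l)\in \kappa_X^{-1}(\yzero)$ lies in $C(Y)$. Suppose for contradiction that some $(y,l)\in \kappa_X^{-1}(\yzero)$ is not in $C(Y)$. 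Then $(y,l)\in \kappa_X^{-1}(Y)\setminus (C(X)\cap C(Y))\subset \VV_{\CC}(Q)$, so $y = \kappa_X(y,l) \in \kappa_X(\VV_{\CC}(Q))$, contradicting $y\in \yzero = \Reg(Y)\setminus \kappa_X(\VV_{\CC}(Q))$.

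The ``moreover'' statement is then immediate from Corollary~\ref{cor:whita}, which characterizes Whitney's condition~(a) for the pair $(\Reg(X), \yzero)$ precisely by the inclusion~\eqref{eq:nwa}. The main subtlety to be careful with is the bookkeeping between sheaves of ideals and the underlying reduced varieties, together with the need for $\kappa_X(\VV_{\CC}(Q))$ to be genuinely Zariski closed. Both points follow from standard commutative algebra and the properness of projective morphisms, and since the characterization in Corollary~\ref{cor:whita} already works up to radicals, all the set-theoretic reasoning above transfers without obstacle.
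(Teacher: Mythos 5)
Your proof is correct and follows essentially the same route as the paper's: the Nullstellensatz reduction to the set-theoretic inclusion $\kappa_X^{-1}(\yzero)\subset C(X)\cap C(\yzero)$, the identification of $\VV_\CC(Q)$ with the closure of $\kappa_X^{-1}(Y)\setminus\bigl(C(X)\cap C(Y)\bigr)$ via the saturation, the equality $C(\yzero)=C(Y)$ from density, and the same chain of inclusions (stated in the paper directly, in your write-up as a contradiction). The only cosmetic differences are that you make explicit the properness of $\kappa_X$ (so that $\yzero$ is genuinely Zariski open) and you dispose of the case $\yzero=\varnothing$ by vacuousness, whereas the paper invokes \cite[Lemma~4.2]{HN2022} to guarantee that $\yzero$ is in fact dense in $Y$.
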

\begin{proof}
By Hilbert Nullstellensatz, it is equivalent to prove 
\begin{equation}
\label{eq:kxz_sub_cxz}
    \kappa_X^{-1}(\yzero)\subset C(X)\cap C(\yzero).
\end{equation}
According to \cite[Lemma~4.2]{HN2022}, $\yzero$ is Zariski dense in $Y$. Consequently, $C(\yzero)=C(Y)$ by \cite[Proposition~2.2]{HN2022}.
Then the inclusion \eqref{eq:kxz_sub_cxz} follows from
\begin{equation*}
    \kappa_X^{-1}(\yzero)
    \subset \kappa_X^{-1}(Y)\setminus\VV_\CC(Q)
    \subset C(X)\cap C(Y)
    =C(X)\cap C(\yzero),
\end{equation*}
where the second inclusion holds because $\VV_\CC(Q)$ is equal to the Zariski closure of 
$\big(\kappa_X^{-1}(Y)\setminus C(X)\cap C(Y)\big)$ (see e.g. \cite[Chapter~4, $\S4$, Theorem~10~$(iii)$]{CLO2015ideals}). We conclude that $(\Reg(X), \yzero)$ satisfies Whitney's condition~(a) by Corollary~\ref{cor:whita}.
\end{proof}

Given two irreducible projective varieties $X$ and $Y$,
where $Y$ is contained in the singular locus of $X$,  we
present Algorithm~\ref{alg:notwhit}, based on Corollary~\ref{cor:whita}
Lemma~\ref{lem:whita_correctness}, to compute a subvariety
$W$ of $\Reg(Y)$ such that $(\Reg(X),\Reg(Y)\setminus W)$
satisfies Whitney's condition~(a), i.e., $W=S(\Reg(X),\Reg(Y))$ as in Definition~\ref{def:filtration}. Next, we utilize 
{\ifcolor\color{teal}\fi Algorithm~\ref{alg:notwhit}} 
as a subroutine to compute Whitney~(a) stratifications of 
 projective varieties. 

{\ifcolor\color{teal}\fi The algorithms in \cite{DJ2021, HN2023} are for computing Whitney (b) irregular points of $(X,Y)$, which can also be used for computing Whitney (a) irregular points because Whitney's condition (b) implies condition (a). However, with the sufficient and necessary condition in Corollary~\ref{cor:whita} and the fact that Whitney's condition (a) is weaker than condition (b), our Algorithm~\ref{alg:notwhit} is simpler and should run faster than the algorithms in \cite{DJ2021, HN2023}}. 

\begin{algorithm}
\caption{Compute Whitney~(a) irregular points}\label{alg:notwhit}
\SetAlgoLined

\KwIn{
Two prime homogeneous ideals $I, J\subset\CC[x_0,x_1,\ldots,x_n]$ 
such that $Y:=\VV_\CC(J)$ is contained in the singular locus of $X:=\VV_\CC(I)$.}
\KwOut{
A list of homogeneous prime ideals $L_a=\{P_1,\ldots, P_k\}$ such that
$W=\bigcup_{i=1}^k\VV_\CC(P_i)$ contains the points in $\Reg(Y)$ where $(\Reg(X), \Reg(Y))$ does not satisfy Whitney's condition~(a).}
\begin{enumerate}[1.]
\item
Compute the vanishing ideals $\icx$ and $\icy$
of $C(X)$ and $C(Y)$ respectively.
\item
Let $\icxy=\icx + \icy$ 
and let $\ikxy = \icx+J$ 
by considering $J$ as an ideal in the ring of $\icx$.
Compute the saturation 
$Q_W = \sqrt{\ikxy}:\icxy^\infty$.
\item Compute the minimal primes $\{Q_1,\ldots, Q_\ell\}$ of $Q_W$.
\item For $i=1,\ldots,\ell$, compute the elimination ideal $P_i=Q_i\cap \CC[x_0,x_1,\ldots, x_n]$, and put $P_i$ into the list $L_a$.
\item
Remove the redundant ideals from $L_a$ so that 
{\ifcolor\color{magenta}\fi the}
ideals in $L_a$ are distinct and the corresponding varieties are not empty sets; return $L_a$.
\end{enumerate}
\end{algorithm}



 
\begin{algorithm}
\caption{Compute Whitney~(a) stratification of 
a projective variety.}
\label{alg:whitneya}
\SetAlgoLined

\KwIn{
A homogeneous 
radical ideal $I\subset\CC[x_0,x_1,\ldots,x_n]$ with $X:=\VV_\CC(I)$.
}
\KwOut{
A list $L=\{L_0,L_1,\ldots, L_r\}$ such that
\begin{itemize}
\item $L_i$ is  a list of 
prime ideals $\{P_{i,1},\ldots,P_{i,k_i}\}$ for $i=0,1,\ldots,r$;
\item $\bigcap_s P_{i,s}\subset \bigcap_t P_{i+1,t}$ for all $i=0,\ldots,r-1$;
\item $X=\bigsqcup_{i=0}^{r}\bigsqcup_{j=1}^{k_i}\Reg(X_{i,j})$
is a Whitney~(a) stratification of the variety $X$, where
$X_{i,j}=\VV_\CC(P_{i,j})$ and $P_{i,j}\in L_i$.
\end{itemize}
}
\SetAlgoLined
\begin{enumerate}[1.]
\item
Compute prime decomposition $I=P_{0,1}\cap\cdots\cap P_{0,k_0}$ and
set $L_0=\{P_{0,1},\ldots,P_{0,k_0}\}$;
\item Compute the  vanishing ideal of $\Sing(X)$ 
using the Jacobian criterion, 
denoted $I_\text{sing}$. Compute prime decomposition $I_\text{sing} =P_{1,1}\cap\cdots\cap P_{1,k_1}$ and set $L_{1}=\{P_{1,1},\ldots,P_{1,k_1}\}$;
\item 
For $1\leq j\leq \dim X$
\begin{enumerate}[(a)]
\item compute the minimal primes of the vanishing ideal of the singular locus of the variety $\bigcup_{P_{j,\mu}\in L_j}\VV_\CC(P_{j,\mu})$; add the computed prime ideals to the list $L_{j+1}$. 
\item for $i=0,\ldots,j-1$ and for all  $P_{i,\ell_i}\in L_{i}$ and $P_{j,\ell_j}\in L_{j}$ with $P_{i,\ell_i}\subsetneq P_{j,\ell_j}$, call Algorithm~\ref{alg:notwhit}
on $(P_{i,\ell_i}, P_{j,\ell_j})$, add the output to the list $L_{j+1}$; 
{\ifcolor\color{magenta}\fi remove the redundant ideals from $L_{j+1}$ such that the ideals in $L_{j+1}$ are distinct and the corresponding varieties are not empty.}
\end{enumerate}
\item Return $L$.
\end{enumerate}
\end{algorithm}

\begin{theorem}
Algorithm~\ref{alg:whitneya} terminates in finite steps 
and outputs a list of prime ideals satisfying the output
specifications.
\end{theorem}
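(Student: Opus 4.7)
The plan is to verify termination first and then check the three output specifications in turn. Termination is immediate: the outer loop of Step~3 runs at most $\dim X$ times, and each inner operation --- prime decomposition, Jacobian ideal, saturation, elimination, and the subroutine Algorithm~\ref{alg:notwhit} --- is a standard finite computation in $\CC[x_0,\ldots,x_n]$. The ideals in each $L_i$ are prime because in Steps~1,~2, and~3(a) they are minimal primes of explicitly given ideals, while Algorithm~\ref{alg:notwhit} obtains its output by taking minimal primes and then eliminating auxiliary variables; the latter preserves primality since the intersection of a prime ideal with a coefficient subring is prime.

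For the chain condition $\bigcap_s P_{i,s}\subset \bigcap_t P_{i+1,t}$, equivalently the geometric inclusion $\bigcup_t \VV_\CC(P_{i+1,t})\subset \bigcup_s \VV_\CC(P_{i,s})$, I would track the two sources of primes added to $L_{j+1}$. Step~3(a) contributes minimal primes of $\Sing\bigl(\bigcup_\mu \VV_\CC(P_{j,\mu})\bigr)$, which cut out subvarieties of the level-$j$ variety. Step~3(b) contributes the output of Algorithm~\ref{alg:notwhit} on pairs $(P_{i,\ell_i},P_{j,\ell_j})$ with $P_{i,\ell_i}\subsetneq P_{j,\ell_j}$; by construction these ideals define subvarieties of $\kappa_X(\VV_\CC(Q))\subset \VV_\CC(P_{j,\ell_j})$, again contained in the level-$j$ variety. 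Hence every prime in $L_{j+1}$ defines a subvariety of the level-$j$ variety, as required.

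The core of the theorem is that $X=\bigsqcup_{i,j}\Reg(X_{i,j})$ is a Whitney~(a) stratification. Set $F_i:=\bigcup_{P_{i,\mu}\in L_i}\VV_\CC(P_{i,\mu})$ and compare with Definition~\ref{def:filtration}. By Lemma~\ref{lem:whita_correctness} together with Corollary~\ref{cor:whita}, Algorithm~\ref{alg:notwhit} on input $(P_{i,\ell_i},P_{j,\ell_j})$ outputs prime ideals whose union contains every Whitney~(a) irregular point of the pair $(\Reg(\VV_\CC(P_{i,\ell_i})),\Reg(\VV_\CC(P_{j,\ell_j})))$. Since Step~3(b) iterates over every nested pair across levels $i<j$, and since the set of Whitney~(a) irregular points is Zariski closed, moving all such points into $L_{j+1}$ and recursing on $j$ produces exactly the filtration of Definition~\ref{def:filtration}. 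Consequently every pair of strata $(\Reg(X_{i,j}),\Reg(X_{i',j'}))$ with $X_{i',j'}\subset \overline{X_{i,j}}$ is Whitney~(a) regular.

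The main obstacle, I expect, is the level-by-level bookkeeping: after a prime is moved from $L_j$ to $L_{j+1}$ because of an irregularity with some pair at level $j$, one must ensure that no Whitney~(a) irregularity with a stratum at an even earlier level goes unaccounted. The argument proceeds by induction on the level $j$, using that Step~3(b) tests \emph{all} nested pairs $(P_{i,\ell_i},P_{j,\ell_j})$ with $i<j$; any such irregularity is therefore captured in the same pass and demoted, so the inductive hypothesis carries through and the resulting $\{F_i\}$ coincide with those of Definition~\ref{def:filtration}.
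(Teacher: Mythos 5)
Your proposal is correct and follows essentially the same route as the paper, whose own proof is just a two-line reduction: termination from the finiteness of each symbolic computation, and correctness of the stratification reduced to the correctness of Algorithm~\ref{alg:notwhit} via Lemma~\ref{lem:whita_correctness}. Your version simply fills in the details (primality, the chain condition, the pairwise bookkeeping) that the paper leaves implicit; the only minor overstatement is that the resulting $\{F_i\}$ ``coincide'' with those of Definition~\ref{def:filtration}, since Algorithm~\ref{alg:notwhit} may return a Zariski-closed superset of the irregular points, but this does not affect the output specification, which only requires a Whitney~(a) stratification rather than the minimal one.
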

\begin{proof}
Algorithm~\ref{alg:whitneya} terminates in finite steps because every ideal concerned in the algorithm is finite-dimensional and has finitely many minimal primes.

The correctness of Algorithm~\ref{alg:whitneya} follows
from the correctness of Algorithm~\ref{alg:notwhit}, which
is established by Lemma~\ref{lem:whita_correctness}.
\end{proof}

{\ifcolor\color{magenta}\fi
The main cost of our algorithm is the computation of the vanishing ideals of conormal spaces and prime decomposition of ideals; we refer to \cite[Theorem~8.4]{HN2022} for the detailed complexity estimate for the algorithm 
``WhitStrat''. 
}

For a real algebraic set $X\subset\mathbb{R}^n$, let $X(\mathbb{C})$ be the complexification of $X$.  According to \cite[Theorem~3.3]{HN23Real}, one can 
obtain a Whitney stratification of $X$ by intersecting each stratum $(F_i)$ of the Whitney stratification of $X(\mathbb{C})$  with $\mathbb{R}^n$.

  

\section{Examples}\label{sec:experiment}
{\ifcolor\color{magenta}\fi We compute }
two examples to support Theorem~\ref{thm:strata}. Each example contains 
{\ifcolor\color{teal}\fi an extreme point }
whose dual variety is an irreducible component of the algebraic boundary of the dual convex body, and this point can be discovered by Whitney (a) stratification but can not be identified by computing iterated singular loci. 
{\ifcolor\color{magenta}\fi We utilized  Macaulay2 and Maple2023 for the computation.}
\subsection{Xano}

\begin{example}
\label{ex:xano}
 The variety $\VV({x^4+z^3-yz^2})$ is called \emph{Xano} from the famous illustration of Hauser \cite{Hauserweb}. One can verify that Xano is locally convex by computing the  Hessian matrix of $y=z+\frac{x^4}{z^2}$: 
\begin{equation*}
    H_{x,z}(y)=\left(\begin{array}{cc}
       \frac{12x^2}{z^2}  &  -\frac{8x^3}{z^3}\\
       -\frac{8x^3}{z^3}  & \frac{6x^4}{z^4}
    \end{array}
    \right),
\end{equation*}
which is a positive semidefinite matrix. After changing the coordinates of  Xano, we have a convex semi-algebraic set, which contains the origin as an interior point,  defined by
$$
K=\{(x,y,z) \in \RR^3 ~|~ f\le 0,\ y\le 1,\ -1\le z\},
$$ where $f = (x^4+(z+1)^3-(y+2)(z+1)^2)\in \QQ[x,y,z]$. 
\begin{figure}[tbhp] 
  \centering 
  \includegraphics[width=0.46\textwidth]{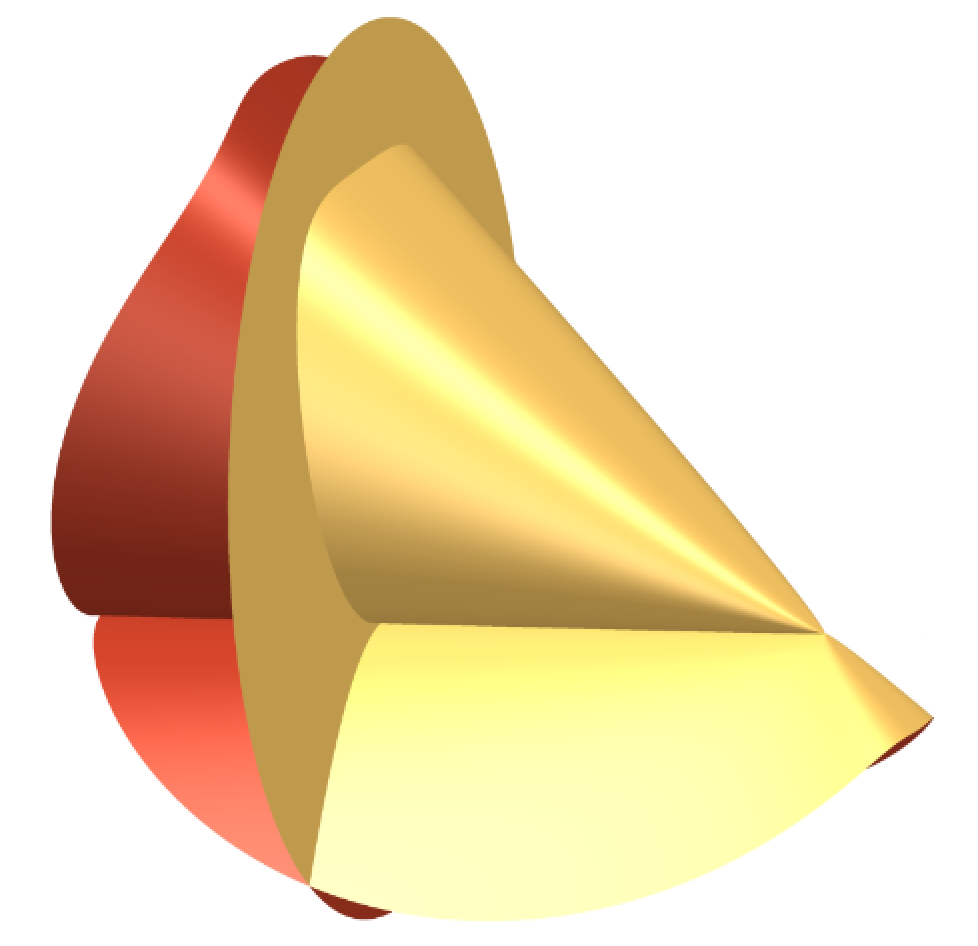}
  \includegraphics[width=0.46\textwidth]{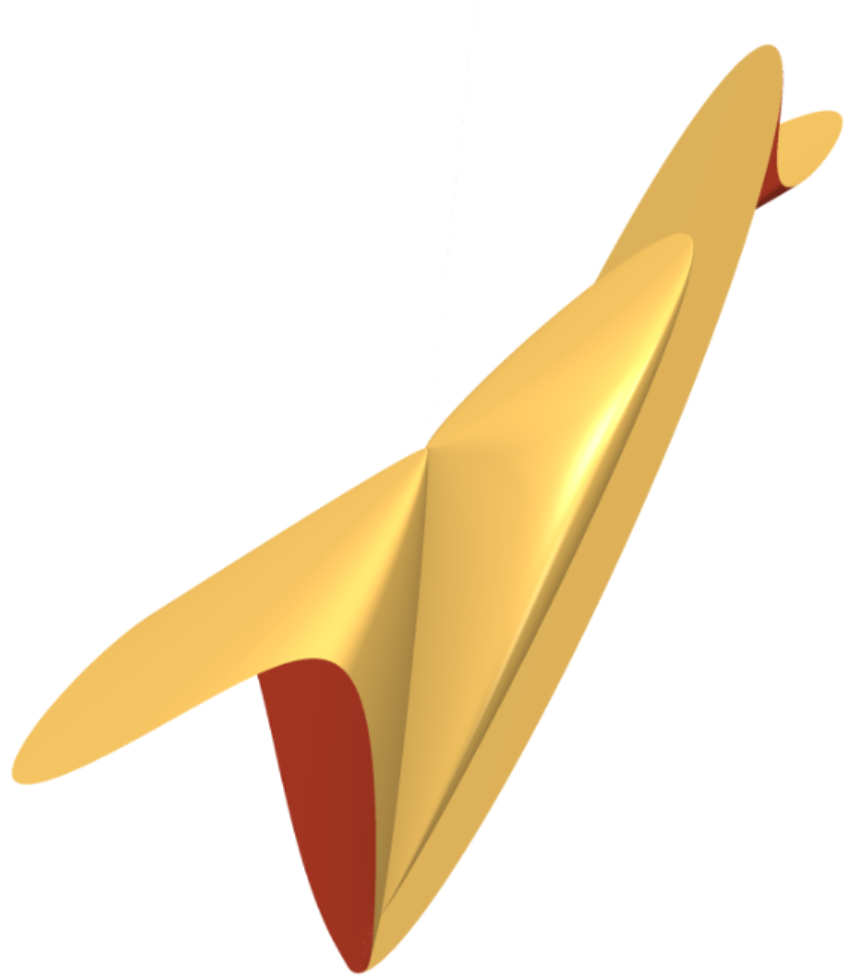}  
  \caption{Shifted Xano defined by $f = (x^4+(z+1)^3-(y+2)(z+1)^2)(y-1)$, and its dual defined by $f^* = (2 v_1+v_2-1)(v_0^4+128 v_1^4+320 v_1^3 v_2+256 v_1^2 v_2^2+64 v_1 v_2^3-64 v_1^3-128 v_1^2 v_2-64 v_1 v_2^2)$. } 
  \label{fig:dxano}  
\end{figure} 
The real variety of a shifted Xano 
{\ifcolor\color{magenta}\fi $X=\VV(f)$} 
with a plane defined by $\VV({y-1})$ is as Figure~\ref{fig:dxano}. 
{\ifcolor \color{purple}\fi 
The algebraic boundary $\partial_aK$ is 
$\VV(f)$. The singular locus of $\partial_aK$ is the union of $\VV(I_1),\VV(I_2)$ where
\begin{align*}
  I_1=&\langle {z+1,x}\rangle,
\\ 
 I_2=&\langle {y-1,x^4+z^3-3z-2}\rangle.
\end{align*}
The 2nd singular locus of $\VV(f)$ is $\VV(I_3)$ where
\begin{align*}
  I_3=&\langle {x,y-1,z+1}\rangle.
\end{align*} 
}
{\ifcolor\color{teal} \fi
The extreme point $p=(0,-2,-1)$ of $K$ lies on the line defined by $I_1$, but $p$ is not contained in any irreducible component the iterated singular loci. The dual of $p$ is the hyperplane $\VV(2v_1+v_2+1)$, which is an irreducible component of $\partial_a(K)^\dual$ {\upshape\cite[Corollary 3.9]{Sinn2015}}. } 
{\ifcolor\color{magenta}\fi Let $\overline{X}$ be the projective closure of $X$. We have $\overline{X}=\VV({g})$ with $g=(x^4+(z+w)^3w-(y+2w)(z+w)^2w)(y-w)\in \QQ[x,y,z,w] $. }
We follow the steps of Algorithm~\ref{alg:whitneya} for computing the Whitney~(a) stratification of the shifted Xano defined by $g$.
  {\ifcolor\color{purple}\fi
\begin{enumerate}
    \item[\quad 1-2.] Using the prime decomposition, we obtain 
        \begin{align*}
        \quad\quad    I=&\ideal{g}=\{\ideal{y-w}\cap \ideal{x^4+(z+w)^3 w-(y+2 w)(z+w)^2 w}, \\
        \quad\quad I_\text{sing}&=\ideal{z+w,x}\cap\ideal{y-w,x^4+z^3 w-3 z w^3-2 w^4}\cap\ideal{w,y-z,x}.
        \end{align*}
        Then we have $L=\{L_0,L_1\}$, with 
        \begin{align*}
         \quad\quad   L_0&=\{\ideal{y-w}, \ideal{x^4+(z+w)^3 w-(y+2 w)(z+w)^2 w} \}, \\
        \quad\quad L_1&=\{\ideal{z+w,x}, \ideal{y-w,x^4+z^3 w-3 z w^3-2 w^4}, \ideal{w,y-z,x}\}.
        \end{align*}
    \item[3.] By iterating and calling Algorithm~\ref{alg:notwhit}, we have
         \begin{align*}
         \quad\quad   L_2&=\{\ideal{z+w, y-w, x},  \ideal{w, z, x}, \ideal{z+w, y+2w, x}\}\}.
        \end{align*}
    \item[4.] Return 
        \begin{align*}
        L=&\{\{ \ideal{y-w}, \ideal{x^4+(z+w)^3 w-(y+2 w)(z+w)^2 w} \}, \\
        &\{\ideal{z+w,x}, \ideal{y-w,x^4+z^3 w-3z w^3-2 w^4},\ideal{w, y-z, x}\}, \\
        &\{\ideal{z+w, y-w, x},  \ideal{w, z, x}, \ideal{z+w, y+2w, x}\}\}
        \end{align*} 
\end{enumerate}
}
 {\ifcolor\color{magenta}\fi 
 Finally, taking $w=1$, we get 
 a Whitney (a) stratification of $\VV(f)$:
 \begin{itemize}
    \item[(1).] $F_0=\VV(f)$,
    \item[(2).] $F_1=\VV(x,z+1)\cup\VV(y-1,x^4+z^3-3 z-2)$,
    \item[(3).] $F_2=\{(0,1,-1),(0,-2,-1)\}$.
\end{itemize}  
 }
{\ifcolor\color{teal}\fi
The extreme point $p=(0,-2,-1)$ is in $F_2$.}
\end{example}

\subsection{Teardrop}

\begin{example}
\label{ex:teardrop}
Let $f=(z^2+y^2-(x+1)(x-1)^2)(y-5(x-1))(y+5(x-1))+\tfrac{1}{10}(x-1)yz^2\in\QQ[x,y,z]$, 
$I=\ideal{f}$, $X=\VV(I)$ and $\overline{X}=\VV(g)$, with $g=(z^2w+y^2w-(x+w)(x-w)^2)(y-5(x-w))(y+5(x-w))+\tfrac{1}{10}w(x-w)yz^2\in \QQ[x,y,z,w]$.
{\ifcolor\color{purple}\fi 
{\ifcolor\color{magenta}\fi
The following convex set $K'$ is a perturbation of the teardrop:
\begin{equation*}
    K'=\{(x,y,z)\in \RR^3\mid f\le 0, 5(x-1)\le y \le -5(x-1)\}.
\end{equation*}
The algebraic boundary of $K'$ 
}
is 
$\VV(f)$. The singular locus of $\partial_aK'$ is the union of $\VV(I_1),\VV(I_2),\VV(I_3)$ where
\begin{align*}
  I_1=&\langle {y,x-1}\rangle,
\\ 
 I_2=&\langle {z,-115+y,x-24}\rangle,
\\ 
 I_3=&\langle {z,115+y,x-24}\rangle.
\end{align*}
The 2nd singular locus of $\VV(f)$ is an empty set. }
{\ifcolor\color{teal}\fi
The extreme point $p=(1,0,0)$ lies on the line defined by $I_1$, but $p$ is not contained in any irreducible component of the iterated singular loci. 
The dual of $p$ is the hyperplane $\VV(x+1)$, which is an irreducible component of $\partial_a(K)^\dual$ {\upshape\cite[Corollary 3.9]{Sinn2015}}. }
For this example, computing the ideal of $C(X)$ suffers from coefficient swell, we use Macaulay2 and Maple to compute a Whitney~(a) stratification of $X$.   
  {\ifcolor\color{magenta}\fi Here are the compuation results: }
   {\ifcolor\color{purple}\fi 
\begin{enumerate}
    \item[1-2.] Using the prime decomposition, we obtain 
        \begin{align*}
            &I=\ideal{g}, \\
        &I_\text{sing}=\ideal{y, x-w}\cap\ideal{w, x, y^2+z^2} \\&\quad\qquad
        \cap\ideal{z, y-115w, x-24w}\cap\ideal{z, y+115w, x-24w}
        \end{align*}
        Then we have $L=\{L_0,L_1\}$, with 
        \begin{align*}
            L_0 &=\{\ideal{g} \}, \\            
         L_1 &= \{\ideal{y,x-w},\ideal{w,x,y^2+z^2}, 
         \\&\quad   \ideal{z, y-115w, x-24w},  \ideal{z, y+115w, x-24w}\}.
        \end{align*}
    \item[3.] By iterating and calling Algorithm~\ref{alg:notwhit}, we have
         \begin{align*}
         \quad\quad   L_2&=\{\ideal{w-x,y,z}, 
        \ideal{w,y,x}\}\}.
        \end{align*}  
    \item[4.] Return 
        \begin{align*}
        L=&\{\{ \ideal{g} \}, \\
       & \{\ideal{y,x-w},\ideal{w,x,y^2+z^2}, \ideal{z, y-115w, x-24w}, \\& \ideal{z, y+115w, x-24w}\}, \\
        &\{ \ideal{w-x,y,z}, 
        \ideal{w,y,x}\}\}
        \end{align*} 
\end{enumerate}}
{\ifcolor\color{magenta}\fi
Finally, taking $w=1$, we get a Whitney (a) stratification of $\VV(f)$:
\begin{itemize}
\item[(1).] $F_0=\VV(f)$,
\item[(2).] $F_1=\VV(x-1,y)\cup\{(24,115,0)\}\cup\{(24,-115,0)\}$,
\item[(3).] $F_2=\{(1,0,0)\}$.
\end{itemize}
}
{\ifcolor\color{teal}\fi
The extreme point $p=(1,0,0)$ is in $F_2$.}
\end{example}

\section*{Acknowledgements}
 Zijia Li and Lihong  Zhi are supported by the National Key R$\&$D Program of China (2023YFA1009401) and the National Natural Science Foundation of China (12071467).  Zhi-Hong Yang is supported by the National Natural
Science Foundation of China (12201425). Zijia Li is partially supported by the Strategic Priority Research Program of the Chinese Academy of Sciences 0640000 \& XDB0640200. The authors acknowledge the support of the Institut Henri Poincar\'e (UAR 839 CNRS-Sorbonne Universit\'e) and LabEx CARMIN (ANR-10-LABX-59-01).   We would like to express our sincere gratitude to Prof. Rainer Sinn for clarifying the question arising from the example of Teardrop. {\ifcolor\color{teal}\fi
 We thank the reviewers for their valuable comments. }

\bibliographystyle{abbrv}




\hspace{0.5cm}

%
%
%
%
%
%
%
%

KLMM, Academy of Mathematics and Systems Science, Chinese
Academy of Sciences, Beijing 100190, China, University of Chinese Academy of Sciences, Beijing 100049, China
\quad \emph{E-mail address}: \url{daizihao@amss.ac.cn};  \url{lizijia@amss.ac.cn}; \url{lzhi@mmrc.iss.ac.cn}

School of Mathematics and Statistics, Central South  University,Changsha,China
 \quad
\emph{E-mail address}: \url{yangzhihong@csu.edu.cn}


%

\end{document}